\numberwithin{equation}{section}
\theoremstyle{plain}
\newtheorem{thm}{Theorem}[section]
\theoremstyle{definition}
\newtheorem{definition}{Definition}[section]
\newtheorem{remark}{Remark}[section]
\newtheorem{lemma}[thm]{Lemma}
\newtheorem{corollary}[thm]{Corollary}
\newtheorem{proposition}[thm]{Proposition}\endlocaldefs
\def\si{{\sigma}}
\def\la{{\lambda}}
\def\ga{{\gamma}}
\def\Ga{{\Gamma}}
\def\om{{\omega}}
\def\Om{{\Omega}}
\def\eps{{\epsilon}}
\newcommand{\und}{\underline}
\newcommand{\dis}{\displaystyle}
 \newcommand{\nn}{\nonumber}
\begin{document}

\begin{frontmatter}
\title{Separation versus  diffusion in a two species system}
\runtitle{A two species system}

\begin{aug}
\author{\fnms{Anna} \snm{De Masi}\thanksref{a}
\ead[label=e1]{demasi@univaq.it}}
\author{\fnms{Pablo A} \snm{Ferrari}
\thanksref{b}
\ead[label=e2]{pferrari@dm.uba.ar}}

\runauthor{A. De Masi and P.A. Ferrari}

\affiliation[a]{Universit\`a L'Aquila}
\affiliation[b]{Universidad de Buenos Aires and IMAS CONICET}

\address{Dipartimento DISIM
\\ Universit\`a L'Aquila\\
Via Vetoio,1 \\67100 L'Aquila (Italy)\\
\printead{e1}}

\address{IMAS-CONICET and Departamento de Matem\'atica \\ Universidad de Buenos Aires
\\Pabell\'on 1
\\ 1428 Ciudad Aut\'onoma de Buenos Aires
(Argentina)\\
\printead{e2}}

\end{aug}

\begin{abstract}
We consider a finite number of particles that move in $\mathbb Z$ as independent random walks. The particles are of two species that we call $a$ and $b$. The rightmost $a$ particle becomes a $b$ particle at constant rate, while the leftmost $b$ particle becomes $a$ particle at the same rate, independently. We prove that in the hydrodynamic limit the evolution is described by a non linear system of two PDE's  with free boundaries.
\end{abstract}

\begin{keyword}[class=MSC]
\kwd[Primary ]{60K35}
\kwd{82C22}
\kwd[; secondary ]{65N75}
\end{keyword}

\begin{keyword}
\kwd{hydrodynamic limit}
\kwd{interacting particle systems}
\kwd{free boundaries PDE}
\end{keyword}


\end{frontmatter}

\section{Introduction}
We consider a two-species particle system in $\mathbb Z$, the species, also called colors, are indicated by  $a$ and $b$. We suppose that at time 0 the species are
partially separated with a rightmost $a$-particle at a site denoted by $X_0$ and  a leftmost  $b$-particle
at a site  $Y_0< X_0$.
The evolution is such that if we are ``color blind"
we just see independent symmetric random walks which jump at rate one on the nearest neighbor sites.
As particles keep their color during their random walk motion this means that the $a$ and $b$ species diffuse
in $\mathbb Z$. In our model however particles
may also change color with the following mechanism.
Independently at rate $\la>0$ the rightmost $a$-particle becomes a
$b$-particle and the leftmost $b$-particle becomes an $a$-particle.
If the evolution consisted only of
this color exchanges, then eventually $a$ and $b$ would separate, but this is contrasted
in our model by the random walk motion of the particles which drives toward homogenization.

The motivation behind this paper is to understand
how much the species separate
as time evolves when both random walks and color exchange are acting,
in particular to determine
the evolution of
the difference $X_t-Y_t$, with $X_t$ and $Y_t$ the positions at time $t$ of the rightmost $a$-particle and leftmost $b$-particle respectively.
In this paper we begin this program by looking at the
hydrodynamic scale: we take $\la=\eps \kappa$, $\kappa>0$, and scale space and time diffusively
($x\to r=\eps x$, $x\in\mathbb Z$,  $t\to \tau=\eps^2 t$).
We assume that the initial distribution is such that the densities of the two species approach in the limit $\eps\to 0$ a macroscopic profile and that the total mass is macroscopically finite. These two assumptions imply that the total number of particles is of order $\eps^{-1}$, see Section \ref{sec1} below for a precise definition of the initial condition.

Under the above hypothesis we prove convergence
as $\eps\to 0$ to a non linear system of two PDE's  with free boundaries.

Despite its simplicity the rule at which species mutate creates a very non local interaction: to find the rightmost b-particle it is necessary to know the whole configuration of $b$-particles. 
Here the interaction is Òtopological rather than metricÓ, as the influence on a particle $i$ of a particle $j$ does not depend on their distance but rather
depends on whether $j$ is to the right or left of $i$. Stochastic evolutions with similar non local interactions have been considered to model problems from  different fields such as queuing theory, \cite{ABK}, statistical mechanics of open systems (currents and Fourier law), \cite{CDGP1}, \cite{DPTV} and pinned interface motions, \cite{lacoin}.

\section{Model and Results}
\label{sec1}

We thus consider a system of colored particles on $\mathbb Z$. Both the initial distribution
and evolution depend on a scaling parameter $\eps>0$. We are interested in the hydrodynamic limit when $\eps\to 0$ and space and time are rescaled diffusively.

{\bf The initial condition}.
The initial macroscopic profile is described
by a pair $(u,v)$ of non negative functions on
$\mathbb R$ which are interpreted as
the macroscopic particle densities of the $a$
and respectively $b$ species.
We suppose  that
$(u,v)\in \mathcal U$:
\begin{eqnarray}
\nn &&\hskip-2cm	\mathcal U=\Big\{ (u,v)\in  C_0(\mathbb R,\mathbb R^2_+) :
\text{support } u=(L,R), \; \text{support } v=(D,E);
\\&&\hskip1.5cm \; L<D<R<E,\;\; u,v >0\text{ in their support }
 \Big\}.
 \label{1.4}
 \end{eqnarray}
The total ``macroscopic mass'' of the two species is denoted by
\[
M_{\rm tot} = \int (u+v).
\]

The macroscopic profiles $(u,v)$ are approximated by particle
configurations using a scaling parameter $\eps>0$.
For each $\eps>0$ the initial  configuration has
$M:= [\eps^{-1}M_{\rm tot}]$ particles.
Their positions $\und x= (x_1,\dots,x_M)$ are random, they are independently identically distributed with parameters
		\begin{equation}
	\label{4.18}
P^\eps[ x_i=x] = Z_\eps^{-1}[u(\eps x)+v(\eps x)],\quad
Z_\eps=\sum_x [u(\eps x)+v(\eps x)].
	\end{equation}
Conditioned on $\und x$ we add independently a color
$\si_i \in \{a,b\}$ to
each particle $i$, by setting
	\begin{equation}
	\label{4.17}
P^\eps[ \si_i=a \,|\, \und x] = \frac {u(\eps x_i)}{u(\eps x_i)+v(\eps x_i)}.
	\end{equation}
It is convenient for technical purposes to
label the particles
but the physically relevant quantities are the occupation numbers
 	\begin{equation}
	\xi_{\und x,\und \si}(y)=\sum_{i=1}^{M} \mathbf 1_{x_i =y,\si_i =a},\qquad 	\eta_{\und x,\und \si}(y)=\sum_{i=1}^{M} \mathbf 1_{x_i =y,\si_i =b},\quad y\in\mathbb Z,
		\end{equation}
where  $\und \si= (\si_1,..,\si_M)$. We then say that  $(\und x,\und \si)$ and $(\und x',\und \si')$
are equivalent if
	\begin{equation}
\label{2.3n}
 \xi_{\und x,\und \si}=\xi_{\und x',\und \si'},\quad \eta_{\und x,\und \si}=\eta_{\und x',\und \si'},
		\end{equation}
which means that
one can be obtained from the other by exchanging colors of particles at the same site.

It easily follows from the above definitions that under $P^\eps$,
$ (\eps\xi_{\und x,\und \si},\eps \eta_{\und x,\und \si})$ converges weakly in probability
to $(u,v)$
as $\eps\to 0$.  Our main results will be to extend the result to positive times
and identify the limit.

{\bf The positions time evolution}.  If we disregard the color of the particles we just see a system of independent random walks
denoted by $\und x(t)= (x_1(t),\dots,x_M(t))$, $t\ge 0$. The $x_i(t)$ are symmetric
independent random walks on $\mathbb Z$ which jump at rate 1
on nearest neighbor sites. We denote by $\mathscr P^\eps$ the law of this process.

We shall next define how the colors change
in time. To this end we first define the label of the rightmost $a$
and leftmost $b$ particles denoted respectively by $i_a(\und x,\und\si)$ and $i_b(\und x,\und\si)$.

 	\begin{definition}
	\label{def2.0}
We denote  the total number of $a$ and $b$ particles respectively by
	\begin{equation}
	\label{2.4ne}
h_a(\und \si)=\sum_i \mathbf 1_{\si_i=a},\qquad h_b(\und\si)=M-h_a(\und \si).
	\end{equation}
If $h_a(\und\si)>0$ we define
$i_a(\und x,\und\si)=i$ if $\si_i=a$ and for any  $j\ne i$ with  $\si_j=a$,   either $x_j<x_i$ or, if $x_j=x_i$, then
$j<i$. Analogously if $h_b(\und\si)>0$ $i_b(\und x,\und\si)=i$ if $\si_i=b$ and if $\si_j=b$,  either $x_j>x_i$, or if $x_j=x_i$, then
$j<i$.
We also define the operators
$H^{\rm right}(\und x,\und \si) =: (\und x,\und \si')$,
$H^{\rm left}(\und x,\und \si) =: (\und x,\und \si'')$
where $\und \si'=\si$ if $h_a(\und \si)=0$, $\und \si''=\si$ if $h_b(\und \si)=0$. Otherwise $\und \si'$ and  $\und \si''$ are obtained from $\und \si$ by changing $\si_{i_a(\und x,\und\si)}$ into $b$ and respectively $\si_{i_b(\und x,\und\si)}$ into $a$.

\end {definition}

The evolution of colors is determined
by the clock rings of the following Poisson processes.

	\begin{definition}
	\label{def3.0}
Given  $\eps>0$ and $j>0$ we define the probability space $(\Om, \mathbb P^\eps)$. $\Om$ is the set of $\om=(\und s,\und \ell)$ where   $\und s=(s_1,s_2,\dots)$ $s_k\le s_{k+1}$ is an ordered  sequence of times, and    $\und \ell=(\ell_1,\ell_2,\dots)$, $\ell_k\in\{$right, left$\}$ is a sequence of marks. $\mathbb P^\eps$ is the  product probability {law} of a Poisson
{process} of intensity $2\eps \kappa$ for
the time sequences $\und s$ and of a
Bernoulli
{process} {with parameter $1/2$} for
the mark sequences $\und \ell.$
In the sequel we will consider strictly increasing  sequences of time $\und s$ since these have $\mathbb P^\eps$- probability one. We denote by $\mathcal P^\eps=\mathscr P^\eps\times \mathbb P^\eps$ the joint law of the random walk $\und x$ and of $\om$.
\end {definition}

{\bf The color time evolution}. 
Given $\eps>0$, $\und x(t)$, $t\ge 0$, and $\om=(\und s,\und \ell)$
we define the ``c\`adl\`ag'' trajectory $\und \si(t)$ by saying that colors are unchanged except
at the times $s_k$:
at these times the  configuration is updated by applying $H^{\rm right}$ or  $H^{\rm left}$ according to $\ell_k=$ right or  $\ell_k=$  left, respectively.
We denote by  $(\und x(t),\und\si(t))$ positions and colors of particles at time $t$.

The main results in this paper are  Theorems \ref{teo1} and \ref{teo2} below.

\begin{thm}
\label{teo1}
Under the above assumptions on the initial data there are non negative continuous
functions  $(\bar u(\cdot,t), \bar v(\cdot,t))$ equal to $(u,v)\in\mathcal U$ at $t=0$ and such that
for any $t>0$
\[
\Big(\eps\, \xi_{\und x(\eps^{-2}t),\und \si(\eps^{-2}t)},
\eps \,\eta_{\und x(\eps^{-2}t),\und \si(\eps^{-2}t)}\Big) \to (\bar u(\cdot,t), \bar v(\cdot,t)),
 \]
as $\eps\to 0$ weakly in probability.

	\end{thm}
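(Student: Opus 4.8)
The plan is to identify the limit through the evolution of three macroscopic observables and to derive a closed system for them. First I would introduce the empirical measures $\pi^\eps_a(t) = \eps\sum_i \mathbf 1_{\si_i(t)=a}\,\delta_{\eps x_i(t)}$ and $\pi^\eps_b(t)$, together with the (random, discrete) interface positions $X_t^\eps = \eps\cdot\max\{x_i : \si_i=a\}$ and $Y_t^\eps = \eps\cdot\min\{x_i : \si_i=b\}$. If we ignore the colors, the underlying configuration is $M = [\eps^{-1}M_{\rm tot}]$ independent rate-one random walks; by the classical (independent particle) hydrodynamic limit, the color-blind empirical measure $\pi^\eps_a(t)+\pi^\eps_b(t)$ converges under $\mathcal P^\eps$ to $\rho(r,\tau)$ solving the heat equation $\partial_\tau\rho = \tfrac12\partial_r^2\rho$ with $\rho(\cdot,0)=u+v$, and moreover one has propagation of chaos and tightness for the full process. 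So the real content is to track how the \emph{color label} attached to each walker is reshuffled by the $H^{\rm right}/H^{\rm left}$ flips occurring at the Poisson times of intensity $2\eps\kappa$.

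The key observation I would exploit is a \textbf{conservation/monotonicity structure} of the color dynamics: between flips colors are frozen and merely transported by the walks; a ``right'' flip removes the color $a$ from the current rightmost $a$-site and a ``left'' flip adds color $a$ to the current leftmost $b$-site. Thus $h_a(\und\si(t))$ changes by $\pm1$ at rate $\eps\kappa$ each way, so $\eps\, h_a$ is a martingale-perturbed quantity that stays within $O(\sqrt{\eps})$ of its initial value $\int u$ on any fixed macroscopic time interval — i.e. the total masses $\int \bar u(\cdot,\tau) = \int u$ and $\int \bar v(\cdot,\tau) = \int v$ are conserved. Next, in the diffusive scaling the flip rate $\eps^{-2}\cdot 2\eps\kappa = 2\kappa\eps^{-1}$ means order $\eps^{-1}$ flips occur in macroscopic time $\tau$; since each flip moves exactly \emph{one} particle's color and there are order $\eps^{-1}$ particles, the cumulative color rearrangement is $O(1)$ macroscopically, and it is concentrated near the interface. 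I would make this precise by showing the discrete interfaces $X^\eps_t$ and $Y^\eps_t$ stay at distance $o(\eps^{-1})$ from each other for the relevant times (so $X^\eps-Y^\eps\to 0$ at the hydrodynamic scale, meaning the two densities share a common free boundary in the limit), and that the densities $\bar u,\bar v$ are supported on complementary sides of a single moving front $\beta(\tau)$ with an overlap layer of macroscopically vanishing width.

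Concretely I would characterize the limit $(\bar u,\bar v)$ as the solution of the free-boundary system: there is a curve $\tau\mapsto\beta(\tau)$ with $\bar u$ supported to its left and $\bar v$ to its right (matching the initial ordering $L<D<R<E$ after the transient), each solving the heat equation $\partial_\tau\bar u=\tfrac12\partial_r^2\bar u$, $\partial_\tau\bar v=\tfrac12\partial_r^2\bar v$ in its own region, with a Stefan-type condition at $r=\beta(\tau)$ coupling the normal derivatives (a flux balance that turns the $\kappa$-rate color exchange into the interface velocity) plus the constraint that $\bar u$ and $\bar v$ both vanish at $\beta(\tau)$. To derive this I would: (i) write the weak/martingale formulation for $\langle \pi^\eps_a(t),\varphi\rangle$, isolating the bulk heat-kernel term from the boundary term generated by flips; (ii) use a local equilibrium / averaging estimate to replace the (order $\eps^{-1}$) flip contributions by their hydrodynamic average, which involves only the density values near the interface — this is where the nonlinearity and the free boundary enter; (iii) establish tightness of $(\pi^\eps_a,\pi^\eps_b, X^\eps, Y^\eps)$ in path space and pass to the limit; (iv) prove uniqueness for the limiting free-boundary PDE to conclude convergence of the full sequence rather than along subsequences.

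The main obstacle is step (ii): controlling the flip term. Each flip acts at the \emph{extremal} occupied site of a species, which is a highly fluctuating, configuration-dependent location, so one cannot naively invoke a replacement lemma. I would handle this by a separate analysis of the interface process — showing that on the microscopic scale the rightmost $a$-particle and leftmost $b$-particle, viewed from a suitable moving frame, relax fast and that the net number of flips in a mesoscopic space-time box is governed by the local densities, so that after integration the boundary term converges to the Stefan flux condition. Establishing the required sharp interface estimates (that $X^\eps_t - Y^\eps_t = o(\eps^{-1})$ with the right local structure, uniformly in $t$ over compacts) is the technically heaviest part and is what separates this model from standard independent-particle hydrodynamics.
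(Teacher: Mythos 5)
Your plan is a genuinely different route from the paper's, and unfortunately it runs into two problems, one in the conjectured answer and one in the method.

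First, the structure you guess for the limit is not what actually happens. You predict a \emph{single} sharp interface $\beta(\tau)$ with $\bar u$ to its left, $\bar v$ to its right, both vanishing there, and an overlap layer of macroscopically vanishing width, i.e.\ $X^\eps_t-Y^\eps_t=o(\eps^{-1})$. In this model the overlap does \emph{not} collapse. Already at $t=0$ the supports are $(L,R)$ for $u$ and $(D,E)$ for $v$ with $L<D<R<E$, so the leftmost $b$-particle sits near $\eps^{-1}D$, the rightmost $a$-particle near $\eps^{-1}R$, and $X^\eps_0-Y^\eps_0\sim \eps^{-1}(R-D)$ is of order $\eps^{-1}$. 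The hydrodynamic equations \eqref{eq11} have \emph{two} free boundaries $V_t<U_t$, $u$ is supported on $(-\infty,U_t)$, $v$ on $(V_t,\infty)$, both are strictly positive on the overlap $(V_t,U_t)$, and the regularity hypothesis of Theorem~\ref{teo2} explicitly keeps $(u(\cdot,t),v(\cdot,t))\in\mathcal U$, i.e.\ the overlap persists. Your Stefan picture with a single moving front is the wrong ansatz, so steps (iii)--(iv) of your plan would be characterising the limit by the wrong PDE, and the uniqueness argument would not close.

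Second, on method. You follow the standard martingale-formulation route and correctly identify the main obstacle: the flip term acts at the \emph{extremal} occupied site of a species, a configuration-dependent location, so a naive replacement lemma fails. You then defer the resolution to ``a separate analysis of the interface process,'' asserting fast relaxation and a mesoscopic-box averaging. This is precisely the hard step and it is not supplied; indeed nothing in the model makes the extremal particle relax to a local equilibrium at diffusive scale in any obvious way. The paper sidesteps this entirely with a monotone-coupling argument: one introduces two auxiliary processes $\und\si^{(\delta,\pm)}$ in which all the colour flips occurring in a window $[k\eps^{-2}\delta,(k+1)\eps^{-2}\delta]$ are batched to its start (for $+$) or its end (for $-$); a combinatorial coupling (Lemma~\ref{lem4.1}, Theorems~\ref{teo3.1} and \ref{teo4.2}) shows that in the partial order $\preccurlyeq$ on occupation tail-sums the true process is sandwiched between these two, deterministically for every $(\und x(\cdot),\und\si,\om)$. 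The auxiliary processes are just independent random walks with colour reassignment at the finitely many times $k\eps^{-2}\delta$, so their hydrodynamic limits $S^{(\delta,\pm)}_{k\delta}(u,v)$ are computable without any replacement lemma; one then lets $\delta\to 0$ and shows the two barriers converge to a common limit. This gives existence of $(\bar u,\bar v)$ (Theorem~\ref{teo1}) without ever having to control the flip term in a martingale decomposition. What your approach would buy, if the replacement step could be done, is a direct identification of the PDE in one go; what the paper's approach buys is that it decouples existence of the limit (Theorem~\ref{teo1}) from its identification (Theorem~\ref{teo2}), and reduces the entire stochastic difficulty to a discrete, pathwise coupling statement.
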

Since for all $s\ge 0$, $ \xi_{\und x(s),\und \si(s)}(x)+
\eta_{\und x(s),\und \si(s)}(x) = \sum_i \mathbf 1_{x_i(s)=x}$
and the $x_i(\cdot)$ are independent random walks, we know, see for instance \cite{dp1}, that
\[
\eps\Big[ \xi_{\und x(\eps^{-2}t),\und \si(\eps^{-2}t)} +
\eta_{\und x(\eps^{-2}t),\und \si(\eps^{-2}t)}\Big] \to w(\cdot,t),
\]
as $\eps\to 0$ weakly in probability with $w$ the solution of the linear heat equation
$w_t= \frac 12 w_{rr}$ and initial condition $u+v$.  Thus it is enough for Theorem \ref{teo1}
to prove convergence of $ \xi_{\und x(\eps^{-2}t),\und \si(\eps^{-2}t)}$ alone.

 The proof is reported in Section \ref{sec.3}, it follows  the
same strategy
used in \cite{dfp} and then in \cite{CDGP1}. Namely we first introduce auxiliary processes
for which the hydrodynamic limit can be computed and then prove by stochastic inequalities
that the true process is sandwiched  between the auxiliary ones and
that the inequalities
become equalities in the limit.  The first part is easy (as the auxiliary processes
are essentially independent random walks) and we just sketch it in Section \ref{sec.3}.
The proof of the stochastic inequalities is instead quite involved and
given in full details
in the next section, being one of the most important parts of the paper.

Theorem  \ref{teo1} only states  the  existence
of the hydrodynamic limit for all macroscopic times $t\ge 0$.
It does not give its properties nor specifies the hydrodynamic equations.
On  the other hand one may guess that the latter are given by
the following system of two equations
	\begin{eqnarray}
	\nn
&&
\hskip-1cm
u_t=\frac 12 u_{rr}+\kappa\,\delta_{V_t},\; r < U_t; \; u(r,0)=u (r), \;u(U_t,t)=0, \; -\frac 12 u_r(U^-_t,t)=\kappa,
\\   \label{eq11}
\\&&
\hskip-1cm v_t=\frac 12 v_{rr}+\kappa\,\delta_{U_t},\; r > V_t; \; v(r,0)=v (r),\; v(V_t,t)=0, \;   -\frac 12 v_r(V^+_t,t)=-\kappa, \nonumber
	\end{eqnarray}
where $U_0=R$ and $V_0=D$, see \eqref{1.4}, and   $\delta_x$ is the Dirac delta.

\eqref{eq11} is a system of two free boundary equations as the domains
$(-\infty, U_t)$ where $u(r,t)$ is defined
and $(V_t,\infty)$ where $v(r,t)$ is defined are also unknowns to be determined.

By the Dirichlet condition $u(r,t)$ can be extended continuously past $U_t$ by setting $u(r,t)\equiv 0$ for all $r\ge U_t$ so that
 $U_t$ is the rightmost-end point of the interval where $u>0$, it thus corresponds to the macroscopic
position of the rightmost particle.  Analogous interpretation is given to $V_t$. In the particle
system $a$-particles are created at rate $\eps \kappa$ at the position of the leftmost $b$-particle,
correspondingly the equation for $u$ has a source term
$\kappa\,\delta_{V_t}$, with an analogous interpretation
for $\kappa\,\delta_{U_t}$.  Finally the boundary condition $-\frac 12 u_r(U^-_t,t)=\kappa$
just says that the outgoing mass flux of $u$ is equal to $\kappa$ which is the macro-analogue of the rate at which
$a$ particles disappear (changing into $b$ particles), analogous interpretation holds for
the term $-\frac 12 v_r(V^+_t,t)=-\kappa$.

The two equations are coupled by the Dirac-delta terms which involve the free boundary terms $U_t$ and $V_t$ which make the problem highly non linear.

We did not find in the  literature the above system of free boundary  problems. 
We
notice however that \eqref{eq11} is similar 
to the free boundary PDE studied and
for which local and
sometimes global existence and uniqueness are proved, see for instance \cite{fasano}.  
It is then conceivable  that the same techniques might be applied to our equation, but we did not pursue this issue here, so we assume existence 
of a solution 
and we prove that this solution coincides with the limit of our particles evolution.

We thus suppose that for some positive
time interval $[0,T]$ there is a regular solution of  \eqref{eq11}. By regular we mean that the functions  $U_t,V_t$ of \eqref{eq11} 
are
$C^1[0,T]$; that  $u(r,t)$, $v(r,t)$ have the differentiability properties required by
\eqref{eq11}, and finally that $(u(\cdot,t),v(\cdot,t))\in\mathcal U$ for all   
 $t\in[0,T]$.

\begin{thm}
\label{teo2}
Assume there is $T>0$ so that a regular solution of \eqref{eq11} exists in the above sense in $[0,T]$.
Then this solution  coincides with the hydrodynamic limit $(\bar u(\cdot,t), \bar v(\cdot,t))$ of Theorem \ref{teo1} restricted to $t\in[0,T]$.

\end{thm}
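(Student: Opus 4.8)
The plan is to identify the hydrodynamic limit $(\bar u(\cdot,t),\bar v(\cdot,t))$ of Theorem \ref{teo1} with the assumed regular solution $(u(\cdot,t),v(\cdot,t))$ of \eqref{eq11} on $[0,T]$. The natural route is to test both objects against a common family of test functions and show they satisfy the same weak formulation, which by uniqueness of the PDE forces equality. I would work with the $a$-species alone, since Theorem \ref{teo1} already reduces everything to convergence of $\eps\,\xi_{\und x(\eps^{-2}t),\und\si(\eps^{-2}t)}$, the $b$-species being symmetric. Fix a smooth compactly supported $\varphi$ and consider the empirical field $\langle \eps\xi(\eps^{-2}t),\varphi\rangle$. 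Applying Dynkin's formula to the particle process, the random-walk part of the generator contributes the discrete Laplacian $\tfrac12\Delta_\eps\varphi$ (which converges to $\tfrac12\varphi_{rr}$), while the color-flip part contributes a term of the form $\eps\kappa\bigl[\varphi(\eps Y^\eps_t)-\varphi(\eps X^\eps_t)\bigr]$ per unit macroscopic time, where $X^\eps_t,Y^\eps_t$ are the microscopic rightmost-$a$ and leftmost-$b$ positions. Passing to the limit, $\eps X^\eps_t\to U_t$ and $\eps Y^\eps_t\to V_t$ (this is where Theorem \ref{teo1} together with the identification of $U_t,V_t$ as the right endpoint of $\mathrm{supp}\,\bar u$ and left endpoint of $\mathrm{supp}\,\bar v$ is used), so this term converges to $\kappa[\varphi(V_t)-\varphi(U_t)]$. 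Hence $\bar u$ satisfies, in the weak sense,
\[
\frac{d}{dt}\langle \bar u(t),\varphi\rangle = \tfrac12\langle \bar u(t),\varphi_{rr}\rangle + \kappa\varphi(V_t) - \kappa\varphi(U_t),
\]
with initial datum $u$.

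The next step is to check that the assumed regular solution of \eqref{eq11} satisfies exactly this same weak identity. This is a computation: multiply the PDE for $u$ by $\varphi$, integrate over $(-\infty,U_t)$, integrate by parts twice, and use the boundary conditions $u(U_t,t)=0$ and $-\tfrac12 u_r(U_t^-,t)=\kappa$ to convert the boundary terms from the Laplacian into precisely $-\kappa\varphi(U_t)$ (the moving-boundary/Leibniz term on the left side vanishes because $u(U_t,t)=0$). The Dirac source $\kappa\delta_{V_t}$ contributes $\kappa\varphi(V_t)$. So the regular solution obeys the identical weak formulation with the identical initial datum. It then remains to argue uniqueness of solutions to this weak problem given the boundary curves; but here one must be careful, because $U_t,V_t$ are themselves determined by the solution. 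I would close the argument by the standard duality/Gronwall device: fix the limiting curves $U_\cdot,V_\cdot$ coming from $(\bar u,\bar v)$, note that $\bar u$ and the restriction of the PDE solution $u$ to $(-\infty,U_t)$ both solve the linear heat equation with the same Dirichlet condition, the same source $\kappa\delta_{V_t}$, and the same initial datum on a common (now fixed) moving domain, and invoke uniqueness for that linear problem — equivalently, test the difference against the solution of the backward heat equation with terminal datum $\varphi$ and conclude $\langle\bar u(t)-u(t),\varphi\rangle=0$ for all $\varphi$.

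The main obstacle, and the step deserving the most care, is the convergence $\eps X^\eps_t\to U_t$ and $\eps Y^\eps_t\to V_t$, i.e. that the microscopic boundary particles track the macroscopic free boundaries, uniformly enough in $t$ to pass to the limit in the flip term after a time integration. This does not follow formally from weak convergence of the empirical measures, since the rightmost particle is a tail functional; one needs tightness of $\{\eps X^\eps_\cdot\}$ and the fact that the limit of $\eps X^\eps_t$ must coincide with the right edge of $\mathrm{supp}\,\bar u(t)$, using that to the right of $U_t$ the limiting $a$-density is genuinely zero while just inside it is positive (this is where $(\bar u(\cdot,t),\bar v(\cdot,t))\in\mathcal U$ and the regularity hypothesis, in particular $-\tfrac12 u_r(U_t^-,t)=\kappa>0$, are essential — they prevent the density from being flat at the edge). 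A convenient way to make this rigorous is to exploit the monotonicity/sandwiching machinery of Section \ref{sec.3}: the auxiliary processes that bound the true process from above and below have boundary particles whose macroscopic positions can be computed explicitly and are shown in the $\eps\to0$ limit to pinch together, and since these bounds become equalities in the limit by Theorem \ref{teo1}, the true boundary is squeezed onto $U_t$. A secondary technical point is controlling the flip term uniformly near $t=0$, where $U_t-V_t$ may be small and the rates of the two Poisson clocks compete; the finite-total-mass assumption and the fact that initially $D<R$ (strict, from \eqref{1.4}) give the room needed to start the estimate.
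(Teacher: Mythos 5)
Your proposal takes a different route from the paper and contains a gap that the paper's strategy is specifically engineered to avoid.

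The paper does \emph{not} derive a weak formulation for the limit and then invoke uniqueness. On the contrary, the introduction explicitly disclaims any uniqueness result for \eqref{eq11} (``we assume existence of a solution and we prove that this solution coincides with the limit''), and the entire point of Section~\ref{sec5} is to get the identification without ever establishing such uniqueness. The mechanism is to sandwich the assumed regular solution between the same discrete-time barriers $S^{(\delta,\pm)}_{k\delta}(\upmu_0,\upnu_0)$ whose $\delta\to0$ limit is, by Theorem~\ref{teor4.4}, the hydrodynamic limit $(\bar u,\bar v)$. That is the content of Theorem~\ref{thm5.1}, proved via the Brownian hitting-time representation \eqref{5.2a}--\eqref{5.2b} of the regular solution, the flux identities of Lemma~\ref{lemma5.1}, the one-step comparisons of Subsections~\ref{subsub521}--\ref{subsub523}, and the order-preservation and error-propagation results (Lemma~\ref{lem5.1}, Proposition~\ref{prop5.3}). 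Once Theorem~\ref{thm5.1} is in hand, Theorem~\ref{teo2} is a one-line corollary: both the regular solution and $(\bar u,\bar v)$ are squeezed between the same barriers as $\delta\to 0$.

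Your plan --- Dynkin's formula, a weak formulation, and then uniqueness of the resulting linear problem ``given the boundary curves'' --- runs into exactly the circularity you flag but do not resolve. To ``fix the limiting curves $U_\cdot,V_\cdot$ coming from $(\bar u,\bar v)$'' and then compare against the regular PDE solution on ``a common (now fixed) moving domain'' already presupposes that the edge of $\mathrm{supp}\,\bar u(\cdot,t)$ equals the boundary curve $U_t$ of the assumed PDE solution; but that identification \emph{is} the theorem. Absent a uniqueness theorem for the free boundary problem itself (not the linear problem at fixed boundary), the duality/Gronwall step cannot close. The second difficulty you identify --- that $\eps X^\eps_t\to U_t$ is a tail functional not controlled by weak convergence --- is also real; the particle-level sandwich inequalities of Section~\ref{sect4} compare the true process to the \emph{auxiliary} processes, not to the PDE solution. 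What bridges the two is precisely the macroscopic sandwich of Theorem~\ref{thm5.1}, which is the ingredient missing from your outline. In short, you correctly name the obstacles but offer no machinery to overcome them, and the paper's barrier comparison is not a device to make your weak-formulation argument rigorous --- it is an alternative framework that renders both the boundary-tracking step and the uniqueness step unnecessary.
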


We prove Theorem \ref{teo2} in Section \ref{sec5}.
The proof has some similarities with the proofs in  \cite{CDGP1} but it requires new ideas
and it is the other   most important point of the paper together with the proof
of the microscopic inequalities.

\section{Microscopic inequalities}
	\label{sect4}

As already mentioned stochastic inequalities play a fundamental role in our proof.
Let $\xi'$ and $\xi$ be non negative, integer valued functions on $\mathbb Z$ with compact support.
		\begin{definition}
\label{def4.1}
 We say that $\xi'\preccurlyeq \xi$  if  for all $x\in \mathbb Z$
	\begin{equation}
	\label{4.1}
F(x;\xi')\le F(x;\xi),\qquad F(x;\xi)=\sum_{y\ge x} \xi(y).
	\end{equation}
We also say that  $(\und x',\und \si')\preccurlyeq (\und x,\und \si)$
if $\xi_{\und x',\und \si'} \preccurlyeq \xi_{\und x,\und \si}$ (observe
that the inequality remains valid if we replace a configuration by an
equivalent one, see \eqref{2.3n}).
\end{definition}

Recalling Definitions \ref{def2.0} and \ref{def3.0} we first introduce the following sets.

\begin{definition} 
	\label{de4.0}
We call  $\mathcal X_t$, $t>0$ the set of all $(\und \si, \om)$ such that
\begin{eqnarray}
\label{3.4}&&
N_a(t):= h_a(\und \si) + \sum_k   \mathbf 1_{s_k\le t}\Big( \mathbf 1_{\ell_k=\text{left}} -
\mathbf 1_{\ell_k=\text{right}}\Big)>0,\quad
\\&&
N_b(t):= M-h_a(\und \si)  + \sum_k  \mathbf 1_{s_k\le t} \Big( \mathbf 1_{\ell_k=\text{right}} -
\mathbf 1_{\ell_k=\text{left}}\Big) >0.
\nn 
\end{eqnarray}
\end{definition} 
In $\mathcal X_t$ there are always both $a$ and $b$ particles
in the time interval $[0,t]$.
In the next section, see Lemma \ref{lem4.1}, we prove that $\mathcal P^\eps(\mathcal X_t)\to 1 $ as $\eps\to 0$.

{\bf The setup.} 
 Throughout this section we fix $\eps>0$, $\delta>0$,
a time interval $[0,\eps^{-2}\delta]$,   a random walk
trajectory $\und x(t)= (x_1(t),\dots,x_M(t))$, $t\in [0,\eps^{-2}\delta]$ and an element 
$(\und \si, \om)\in \mathcal X_{\eps^{-2}\delta}$.

{\bf
The auxiliary evolutions.} They are denoted by $(\und x(t), \und \si^{(\delta,\pm)}(t))$ and
 are defined 
by anticipating
or postponing the color changes at the initial, respectively final, time.  Namely, given $(\und \si, \om)\in \mathcal X_{\eps^{-2}\delta}$ we let  
$\und \si^{(\delta,+)}(t)$ be the function left continuous with right limits obtained by setting $\und \si^{(\delta,+)}(t)=\und \si^{(\delta,+)}(0^+)$ for $t\in(0,\eps^{-2}\delta]$
and
	\begin{equation}
(\und x(0^+),\und \si^{(\delta,+)}(0^+)) =  \prod_{i=1}^m H^{\ell_i}(\und x(0),\und \si).
  \label{4.15}
  \end{equation}
 Similarly $\und \si^{(\delta,-)}(t)$ is the function right continuous with left limits obtained by setting  $\und \si^{(\delta,-)}(t)=\und\si$ for all $t\in[0,\eps^{-2}\delta)$ while
 	\begin{equation}
	(\und x(\eps^{-2}\delta),\und \si^{(\delta,-)}(\eps^{-2}\delta)) =
  \prod_{i=1}^m H^{\ell_i}(\und x(\eps^{-2}\delta),\und \si).
  \label{4.16}
  \end{equation}

 \begin{thm}
 \label{teo3.1}
In the above setup, given $(\und \si, \om)\in \mathcal X_{\eps^{-2}\delta}$ and $\und \si'$ such that $h_a(\si')=h_a(\si)$ and  $(\und x(0),\und \si')\preccurlyeq (\und x(0),\und \si))$, we have 
	\begin{equation}
	\label{3.2}
 (\und x(\eps^{-2}\delta), {\si'}^{(\delta,-)}(\eps^{-2}\delta))
 \preccurlyeq (\und x(\eps^{-2}\delta), \si(\eps^{-2}\delta)),
	\end{equation}
	\begin{equation}
	\label{3.3}
 (\und x(\eps^{-2}\delta), {\si'}(\eps^{-2}\delta))
 \preccurlyeq (\und x(\eps^{-2}\delta), \si^{(\delta,+)}(\eps^{-2}\delta)).
	\end{equation}
In \eqref{3.2} $ {\si'}^{(\delta,-)}(\eps^{-2}\delta)$ is the 
auxiliary evolution associated to $(\und \si',\om)$, and in
\eqref{3.3}  $ {\si'}(\eps^{-2}\delta)$
 is the true evolution associated to $(\und \si',\om)$. The evolutions on the right hand side of \eqref{3.2} and \eqref{3.3} are respectively the true and the auxiliary $(\delta,+)$-evolutions associated to $(\und \si,\om)$.
 \end{thm}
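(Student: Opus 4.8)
The plan is to prove the two inequalities by tracking the ordering $\preccurlyeq$ through the joint construction of the evolutions on the same random-walk trajectory and the same clock–mark sequence $\om$. Since the positions $\und x(t)$ are shared, the partial order $\preccurlyeq$ between two colored configurations depends only on the $a$-labels, and it suffices to follow the function $F(x;\xi_{\und x(t),\cdot})$ at each site. The key observation is that when the positions agree, the operators $H^{\rm right}$ and $H^{\rm left}$ act on $F$ in a monotone-friendly way: $H^{\rm right}$ removes one $a$ at the rightmost $a$-site, which lowers $F(x;\cdot)$ by one only for $x$ to the right of (or at) that site; $H^{\rm left}$ adds one $a$ at the leftmost $b$-site, raising $F(x;\cdot)$ by one only for $x$ to the right of that site. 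So one must show a lemma of the following shape: if $(\und x,\und\si')\preccurlyeq(\und x,\und\si)$ with $h_a(\si')=h_a(\si)$, then for $\ell\in\{{\rm left},{\rm right}\}$ one has $(\und x, H^\ell(\und x,\und\si)')\preccurlyeq(\und x, H^\ell(\und x,\und\si))$ \emph{after a possible correction at a single site}. The subtlety is that $H^{\rm right}$ applied to the smaller configuration may act at a site strictly to the left of where it acts on the larger one (because the rightmost $a$ in $\si'$ can be to the left of the rightmost $a$ in $\si$), and likewise $H^{\rm left}$ may act further right in the smaller configuration; these are exactly the ``good'' directions, so the inequality is preserved — this is the heart of the matter and I expect it to be the main obstacle, because one has to check it case by case depending on the relative location of $U$-type and $V$-type extremal sites in the two configurations and, crucially, that $h_a$ is preserved under the full prescribed sequence of updates (this is why the hypothesis $(\und\si,\om)\in\mathcal X_{\eps^{-2}\delta}$ is needed: it guarantees both species are nonempty throughout, so every $H^{\rm right}$ and $H^{\rm left}$ genuinely acts).

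For \eqref{3.3}, I would argue as follows. Run both evolutions on $[0,\eps^{-2}\delta]$. For $\und\si^{(\delta,+)}$ all color changes with clock times in $[0,\eps^{-2}\delta]$ are performed instantaneously at $0^+$ using the \emph{initial} positions $\und x(0)$; thereafter no further color change occurs and the configuration is frozen (in color) while the walks move. The comparison then splits into two pieces. First, at time $0^+$: starting from $(\und x(0),\und\si')\preccurlyeq(\und x(0),\und\si)$, apply the same sequence $H^{\ell_1},\dots,H^{\ell_m}$ to both; by the single-site monotonicity lemma above, iterated $m$ times, one gets $(\und x(0),{\si'}(0))\preccurlyeq(\und x(0),\und\si^{(\delta,+)}(0^+))$ — here ${\si'}(0)$ must be read as the true evolution of $\si'$, which has performed none of these changes yet, so actually the relevant statement is that the true $\sigma'$-evolution at later times stays below the frozen $\sigma^{(\delta,+)}$-evolution. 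The correct route is: show that performing an $H^{\rm left}$ (which can only increase $F$) \emph{earlier} produces a larger configuration than performing it later, and performing an $H^{\rm right}$ (which can only decrease $F$) earlier produces a smaller configuration than performing it later, \emph{as long as positions are held relatively fixed} — and between clock times positions evolve identically in the two coupled processes, so the $F$-differences are transported unchanged by the common random-walk moves. Chaining these ``an early left / a late right only helps'' comparisons across all clock rings in $[0,\eps^{-2}\delta]$ gives \eqref{3.3}.

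Inequality \eqref{3.2} is the mirror image: $\und\si^{(\delta,-)}$ postpones \emph{all} color changes to the final time $\eps^{-2}\delta$, performing them there at positions $\und x(\eps^{-2}\delta)$. So one compares the true $\si'$-evolution (which has done its left-moves early and its right-moves early, i.e. ``too early'' from the point of view of wanting to be small) against $\sigma^{(\delta,-)}$ which has done everything at the last instant. Again: doing an $H^{\rm left}$ later makes the configuration \emph{smaller} (delayed increase), doing an $H^{\rm right}$ later makes it \emph{larger} (delayed decrease), so postponing everything to the end compared to the true schedule goes the ``wrong'' way for left-moves and the ``right'' way for right-moves — which is why one also needs the input hypothesis $(\und x(0),\und\si')\preccurlyeq(\und x(0),\und\si)$ with the extra slack from the smaller starting configuration $\si'$ to absorb the mismatch. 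Concretely I would interleave the two processes clock-ring by clock-ring and maintain the invariant $(\und x(s), {\sigma'}(s)_{\text{with future-pending changes applied}}) \preccurlyeq (\und x(s),\sigma(s))$, checking at each ring that applying $H^{\ell_k}$ to the left side and to the right side preserves it via the single-site lemma, and that transporting by the common walk step for $s\in(s_k,s_{k+1})$ preserves it trivially since $F(x;\xi_{\und x(\cdot),\cdot})$ changes by the same increments on both sides. The one genuinely delicate point, which I would isolate as a separate lemma, is the base monotonicity claim for a single application of $H^{\rm right}$ or $H^{\rm left}$ under a position-preserving $\preccurlyeq$ with equal $a$-count — establishing that, together with the bookkeeping that $N_a,N_b>0$ keeps every update well-defined, is where essentially all the work lies.
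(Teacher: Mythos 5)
Your overall plan — couple the two colored processes on the common walk trajectory and the common clock-mark sequence, and track the order $\preccurlyeq$ through the time interval — is the right one, and your single-site observation about $H^{\rm right}$ and $H^{\rm left}$ acting monotonically on $F$ is correct in isolation. However there is a genuine gap at the step you call trivial: you claim that between clock rings, ``$F(x;\xi_{\und x(\cdot),\cdot})$ changes by the same increments on both sides'' because positions are shared. This is false. Particle $i$ has color $\si_i$ in one process and $\si'_i$ in the other; when $\si_i=a$ but $\si'_i=b$, a move of particle $i$ changes $F(\cdot;\xi)$ but not $F(\cdot;\xi')$. Concretely, take $M=2$, $\und x(0)=(1,2)$, $\si=(b,a)$, $\si'=(a,b)$: then $\xi'\preccurlyeq\xi$ and $h_a(\si')=h_a(\si)$, but after particle $1$ walks to site $3$ one finds $F(3;\xi')=1>0=F(3;\xi)$, so the order is destroyed. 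Preservation of $\preccurlyeq$ under the common walk is in fact the subtle point; it only holds \emph{modulo recoloring particles that sit on the same site}, and maintaining a valid bookkeeping of which particles have been swapped — across walk moves \emph{and} across the $H$-updates, where the two processes have temporarily different $a$-counts — is precisely what makes the theorem nontrivial.

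The paper handles this by never reasoning on $F$ directly during the walk. Instead it introduces a splitting of the coupled configuration into married pairs $P$, singletons $S$, and two discrepancy sets $I,J$, with Lemma~\ref{lem4.1} equating ``$I=J=\emptyset$'' to the order $\preccurlyeq$. The walk is absorbed into the maps $R\in\mathcal R$, which dissolve a married pair exactly when the two members collide (this is the recoloring, and it is what your counterexample above needs); crucially $R$ never touches $I$ or $J$. The color changes are encoded by the $C_1^{\ell}$ (acting on $\si$, potentially creating a discrepancy) and $C_2^{\ell}$ (acting on $\si'$, a recovery move), and Theorem~\ref{teo4.2} proves by induction the invariant $N^{\rm right}_{\le q}-|I_q|=N^{\rm left}_{\le q}-|J_q|\ge 0$ (with a mirrored version for $q>m$), which forces $I=J=\emptyset$ at the end regardless of the order in which the $R$'s and $C$'s are interleaved. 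This single combinatorial statement then yields both \eqref{3.2} and \eqref{3.3} by two different choices of the $R$-sequence, so the ``early left / late right'' intuition you invoke for \eqref{3.3} and the ``slack absorbs the mismatch'' hope you invoke for \eqref{3.2} are replaced by one uniform bookkeeping argument. To repair your proposal you would need, at minimum, to (i) abandon the claim that the walk trivially preserves $\preccurlyeq$ and replace it with an explicit same-site recoloring mechanism, and (ii) formulate an invariant that remains meaningful while the two processes' $a$-counts are out of sync between clock rings; these are exactly the roles played by the $R$-maps and the discrepancy-count identities in the paper.
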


We shall prove Theorem \ref{teo3.1} in the remaining part of this section by
constructing joint processes (that we call couplings by an abuse of notation) which exploit the fact that
the above inequalities remain valid
if we exchange colors of particles at the same site.

The coupling
is determined by specifying the colors of each $x_i(t)$ in the two processes, the one associated to $(\und \si',\om)$ and the one associated  to $(\und \si,\om)$: they have same positions and same $\om$. Thus the configurations in the coupled process are systems
of particles with two colors:
$(\und x, \Sigma)$, $\Sigma=(\und\si,\und\si')$. We call $(x_i,\si_i,\si'_i)$,
$i=1,., M$,
the specification of particle $i$.
With the aim
of establishing stochastic inequalities we split the particles of $(\und x,\Sigma)$ into
``married pairs'', ``singletons'', and ``discrepancies'' using the following notions:

\begin{itemize}
\item $i$ is a $a$-singleton or a $b$-singleton if it
has specification $(x_i, a, a)$, respectively  $(x_i,b,b)$;

\item $i$ is married with $j$ if $i$ has specification $(x_i,a,b)$ and
$j$ has specification  $(x_j,b,a)$  with $x_i>x_j$; $(i,j)$  are then said to be a ``married pair''.

\item $i$ is a $(b,a)$-discrepancy or a $(a,b)$-discrepancy if
it has specification $(x_i,b,a)$ or  $(x_i,a,b)$ respectively and it is not in a married
pair.
\end{itemize}

We  shall say  that a quadruple $(P,S,I,J)$ is a
``splitting''
of  $(\und x, \Sigma)$  if $P$ is a set of  married pairs,
$S$ a set of singletons, $I$ a set of $(b,a)$ discrepancies,
$J$ a set of $(a,b)$ discrepancies and each particle is either
in one (and only one) of the pairs in $P$ or if it is not in any of the pairs
then it is
in one (and only one) of the other three sets.  Of course
there are in general many ways to split $(\und x, \Sigma)$
into a quadruple $(P,S,I,J)$,   we want splittings with as less discrepancies as possible,
as it follows from the following lemma which will be
extensively used in the sequel (its proof is an immediate consequence of
the definitions and  omitted).

\begin{lemma}
\label{lem4.1}
Let $(P,S,I,J)$ be a splitting of
$(\und x, \Sigma)$, $\Sigma=(\und \si,\und \si')$ with  $I=J=\emptyset$.
Then
$(\und x, \und \si')  \preccurlyeq
(\und x, \und \si) $.  Viceversa if $(\und x, \und \si')  \preccurlyeq
(\und x, \und \si) $ there exists a splitting of  $(\und x, \Sigma)$, $\Sigma=(\und \si,\und \si')$, such that
$I=J=\emptyset$.

\end{lemma}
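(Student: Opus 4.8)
The plan is to read both implications off the bookkeeping function $F(\cdot\,;\cdot)$ of \eqref{4.1} by splitting the count of $a$-particles sitting at sites $\ge x$ according to the role each particle plays. Given a splitting $(P,S,I,J)$ of $(\und x,\Sigma)$, for every $x$ an $a$-singleton contributes the same amount to $F(x;\xi_{\und x,\und \si})$ and to $F(x;\xi_{\und x,\und \si'})$, a $b$-singleton contributes $0$ to both, a married pair $(i,j)$ contributes $\mathbf 1_{x_i\ge x}$ to the first and $\mathbf 1_{x_j\ge x}$ to the second, an $(a,b)$-discrepancy contributes $\mathbf 1_{x_i\ge x}$ only to the first, and a $(b,a)$-discrepancy contributes $\mathbf 1_{x_i\ge x}$ only to the second. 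Summing over all particles,
\[
F(x;\xi_{\und x,\und \si})-F(x;\xi_{\und x,\und \si'})
=\sum_{(i,j)\in P}\bigl(\mathbf 1_{x_i\ge x}-\mathbf 1_{x_j\ge x}\bigr)
+\sum_{i\in J}\mathbf 1_{x_i\ge x}-\sum_{i\in I}\mathbf 1_{x_i\ge x}.
\]

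For the first assertion I take $I=J=\emptyset$ in this identity. Since every married pair has $x_i>x_j$, the event $\{x_j\ge x\}$ is contained in $\{x_i\ge x\}$, so each bracket $\mathbf 1_{x_i\ge x}-\mathbf 1_{x_j\ge x}$ is non-negative; hence $F(x;\xi_{\und x,\und \si'})\le F(x;\xi_{\und x,\und \si})$ for all $x$, which is exactly $(\und x,\und \si')\preccurlyeq(\und x,\und \si)$.

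For the converse I must build the splitting. First I pass, if necessary, to an equivalent configuration: at every site carrying both $(a,b)$- and $(b,a)$-particles I exchange their $\und\si$-colours in pairs (an equivalence move, which preserves $\preccurlyeq$ by the remark in Definition \ref{def4.1}), so that afterwards no site carries both an $(a,b)$- and a $(b,a)$-particle, the particles removed this way becoming $a$- and $b$-singletons. Now I assign the charge $c_i=+1$ to a particle with specification $(x_i,a,b)$, $c_i=-1$ to one with specification $(x_i,b,a)$, and $c_i=0$ to the two singleton types; the identity above with $P=\emptyset$ reads $F(x;\xi_{\und x,\und \si})-F(x;\xi_{\und x,\und \si'})=\sum_{i:\,x_i\ge x}c_i$, so \eqref{4.1} says precisely that every right-tail charge sum is $\ge 0$, and letting $x\to-\infty$ it gives total charge $\sum_i c_i=h_a(\und \si)-h_a(\und \si')$, which in every application of the lemma equals $0$ (it is among the hypotheses of Theorem \ref{teo3.1}). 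With non-negative right-tail sums and vanishing total I run the standard parenthesis matching: scan the sites from right to left, push each $+1$-particle onto a stack, and match each $-1$-particle to the particle currently on top of the stack; the tail-sum condition keeps the stack non-empty at each $-1$, the vanishing total empties it at the end, and since no site carries both signs each matched pair $(i,j)$ automatically has $x_i>x_j$, hence is a married pair. Taking $P$ to be these pairs, $S$ the remaining (necessarily singleton) particles, and $I=J=\emptyset$ finishes the splitting.

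The only point that is verification rather than pure unwinding of Definitions \ref{def2.0}--\ref{def4.1} is that the stack is never empty when a $-1$-particle is processed; but when the scan reaches such a particle at site $x$, the stack size equals $\sum_{x_i>x}c_i$, and $\sum_{x_i\ge x}c_i\ge 0$ forces this to be at least the number of $-1$'s sitting at $x$, so every pop is legitimate. Everything else is immediate, which is presumably why the authors omit the argument.
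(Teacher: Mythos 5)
Your proof is correct, and since the paper omits the argument entirely (calling it ``an immediate consequence of the definitions''), there is no paper proof to compare against; the tail-sum identity followed by a right-to-left stack (parenthesis) matching is the natural and presumably intended route. Two small points are worth flagging, both of which you handle but which are easy to overlook. First, the converse of the lemma as literally stated is false: with one particle of specification $(0,a,b)$ one has $\xi_{\und x,\und\si'}\preccurlyeq\xi_{\und x,\und\si}$ but no discrepancy-free splitting exists; the extra hypothesis $h_a(\und\si)=h_a(\und\si')$ (equivalently, equality at $x\to-\infty$ in \eqref{4.1}) is genuinely needed and is indeed always in force where the lemma is invoked, exactly as you observe. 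Second, your preprocessing step produces a splitting not of $(\und x,\Sigma)$ itself but of an equivalent configuration; this is harmless here because $\preccurlyeq$, the $R$- and $C$-maps, and Theorem \ref{teo4.2} are all insensitive to exchanges of colours at a site, but it is worth saying explicitly that ``splitting'' should be read up to the equivalence \eqref{2.3n} — and without the preprocessing the stack argument would fail, since a site carrying both signs could force a match with $x_i=x_j$, which the definition of a married pair forbids. With those caveats made explicit the argument is complete.
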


The coupling will be defined by specifying the  evolution
 $(\und x(t),\Sigma(t))$ and its splitting
$(P(t),S(t),I(t),J(t))$.

\noindent  {\bf The map $R$.}
Let $(s_k,s_{k+1})$ be an interval between events of the Poisson process and let
$(P,S,I,J)$ the quadruple at time $s_k$. Let $t^*$ be  the
first time after $s_k$ when $x_i(t^*)= x_j(t^*)$ for some $(i,j)  \in P$. We then set
 $(P(t),S(t),I(t),J(t))=(P,S,I,J)$ for $t < \min\{t^*,s_{k+1}\}$ and if $t^*<s_{k+1}$ we set
 $P(t^*)=P\setminus (i,j)$
and put $i,j \in S(t^*)$ with $i$  a $a$-singleton and $j$ a $b$-singleton (we have used here the fact
that we may exchange colors of particles at a same site). By iteration
the evolution  is extended till time $ s_{k+1}$ with a new configuration $\und x'$
and with a new splitting $(P',S',I',J')$.

The set of possible $\und x',(P',S',I',J')$ obtained in this way   is characterized
by the following requests: $I'=I$, $J'=J$
$P' \subseteq P$ with  $S'\setminus S$  made by all labels $i$ and $j$ of the pairs
which have disappeared.  $\und x'$ has the only constraint that $x'_i > x'_j$ if
$(i,j)\in P'$.  We denote by  $\mathcal R$ the collection of all maps $R$ such that
$R(\und x, P,S,I,J)$ has the  above properties.
The important points for the sequel are: (i) The discrepancies are unchanged under any $R\in \mathcal R$
 and  (ii) the identity map is in $\mathcal R$.

\noindent  {\bf The $C$-maps.} They describe the changes of colors which involve, according to cases, the
particles $i_a(\und x,\si)$, $i_b(\und x,\si)$, $i_a(\und x,\si')$
and $i_b(\und x,\si')$. Due to such changes
 the splitting quadruple $(P;S;I;J)$ associated to $(\und x,\Sigma)$ will be modified into
 a new quadruple $(P';S';I';J')$, in the way described below:

\begin{enumerate}
\item[$C^{\rm right}_1$:] shorthand $i=i_a(\und x,\si)$
	\begin{enumerate}
	\item if there is $j$  such that $(i,j)\in P$ then $P' = P \setminus (i,j)$,
$S'= S\cup i$, $I'= I \cup j$, $J'=J$.
	\item if $i\in {S}$ then $S'=S\setminus i$, $I' = I \cup i$, $J'=J$, $P'=P$.
	\item if $i\in J$  then $S'=S\cup i$, $J'=J\setminus i$, $I' = I $ and $P'=P$.
	\end{enumerate}

\item[$C_1^{\rm left}$:] shorthand $i=i_b(\und x,\si)$
	\begin{enumerate}
	\item if there is $j$  such that $(j,i)\in P$, then $P' = P \setminus (j,i)$,
$S'= S\cup i$, $J'= J \cup j$, $I'=I$.
	\item if $i\in {S}$ then $S'=S\setminus i$, $J' = J \cup i$, $I'=I$, $P'=P$.
	\item if $i\in I$  then $S'=S\cup i$, $I'=I\setminus i$, $J' = J $ and $P'=P$.
	\end{enumerate}

\item[$C^{\rm right}_2$:]  shorthand $i=i_a(\und x,\si')$ and $k$ the largest label in $I$
if  $I\ne \emptyset$
\begin{enumerate}
	\item  if there is $j$  such that $(j,i)\in P$  and $I\ne \emptyset$,
then $P' = P \setminus (j,i) \cup (j,k)$,
$S'= S \cup i$, $I'= I \setminus k$, $J'=J$;
	if instead $I= \emptyset$, then $P' = P \setminus (j,i)$,
$S'= S \cup i$, $I'=  I= \emptyset$, $J'=J\cup j$.
	\item if $i\in {S}$ and $I\ne \emptyset$
then $S'=S\setminus i$, $I' = I \setminus k$, $J'=J$, $P'=P\cup (i,k)$; if instead
 $I= \emptyset$,
then  $S'=S\setminus i$, $P'=P$, $I'=I$ and $J'= J\cup i$.
	\item if $i\in I$ then $I' = I \setminus i$, $P'=P$, $S'=S\cup i$ and $J'=J$.
	\end{enumerate}
\end{enumerate}

\begin{enumerate}

\item[$C_2^{\rm left}$:] shorthand $i=i_b(\und x,\si')$ and $k$ the largest label in $J$
if  $J\ne \emptyset$
\begin{enumerate}
	\item if there is $j$  such that $(i,j)\in P$  and $J\ne \emptyset$,
then $P' = P \setminus (i,j) \cup (k,j)$,
$S'= S \cup i$, $J'= J \setminus k$, $I'=I$;
	if instead $J= \emptyset$, then $P' = P \setminus (i,j)$,
$S'= S \cup i$, $J'= J= \emptyset$, $I'=I\cup j$.

\item if $i\in {S}$ and $J\ne \emptyset$
then $S'=S\setminus i$, $J' = J \setminus k$, $I'=I$, $P'=P\cup (k,i)$; if instead
 $J= \emptyset$,
then  $S'=S\setminus i$, $P'=P$, $J'=J$ and $I'= I\cup i$.
	\item if $i\in J$ then $J' = J \setminus i$, $P'=P$, $S'=S\cup i$ and $I'=I$.
	\end{enumerate}
\end{enumerate}

\begin{remark}
 The subscript $1$, $2$, reminds that the $C$
operator acts on the first component $\si$,
respectively the second one, $\si'$.  The above properties of the $C_2$ operators follow from the definitions of $i_a$ and $i_b$ allowing for the formation of married pairs which are instead not used  for the $C_1$ operators.  Recall that our goal is to prove that at the end $I$ and $J$ are empty, in this respect the $C_1$ operators are dangerous, as they may increase by 1 the cardinality of $I$ (with  $C^{\rm right}_1$) or $J$  (with  $C^{\rm left}_1$) while
the $C_2$ are recovery operators as they decrease by 1 the cardinality of $I$ (with  $C^{\rm right}_2$)  or $J$  (with  $C^{\rm left}_2$) when $I$ and $J$ are non empty.  This is behind the proof of the next theorem which, as we shall see after its proof, yields as a corollary the proof of Theorem \ref{teo3.1}.

\end{remark}

    \begin{thm}
    \label{teo4.2}
    Let $(P,S,I,J)$, $I=J=\emptyset$, be a quadruple associated
    to $(\und x, \Sigma)$.  Then for any non negative integer
    $m$, any sequence $(R_1,\dots,R_{m})$,
     $(R'_1,\dots,R'_{m})$ of elements of $\mathcal R$,        \begin{equation}
  \label{4.12}
(\und x^*, P^*,S^*,I^*,J^*):=( C_2 R')_m( C_1 R)_m(\und x, P,S,I,J)
   \end{equation}
 has $I^*=J^*=\emptyset$    where we have used the notation for $q\le m$: $( C_1 R)_q= C_1^{\ell_q}R_q\cdots  C_1^{\ell_1}R_1$
    and  $( C_2 R')_q= C_2^{\ell_q}R'_q\cdots  C_2^{\ell_1}R'_1$.

  \end{thm}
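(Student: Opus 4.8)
The plan is to distil from the (elaborate) case lists defining the four $C$-maps just two quantitative facts and then run a short induction on the number of $C_2$-operations already performed. The first fact is a monotonicity bound, read off the sub-cases: $C^{\rm right}_1$ raises $|I|$ by at most $1$ and never raises $|J|$ (cases (a),(b) do $|I|\mapsto|I|+1$, case (c) does $|J|\mapsto|J|-1$), and symmetrically $C^{\rm left}_1$ raises $|J|$ by at most $1$ and never raises $|I|$; whereas $C^{\rm right}_2$ \emph{lowers} $|I|$ by exactly $1$ and keeps $|J|$ fixed whenever $I\ne\emptyset$, and keeps $|I|=0$ while raising $|J|$ by $1$ when $I=\emptyset$ (here one uses that its case (c) needs $i\in I$, so it cannot occur when $I=\emptyset$), and symmetrically for $C^{\rm left}_2$; the maps in $\mathcal R$ leave $I,J$ untouched, by property (i) recorded after the definition of $\mathcal R$. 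The second fact is a conservation law: for any valid splitting, $h_a(\si)-h_a(\si')=|J|-|I|$ (an $a$-singleton or a married pair carries an $a$ in both coordinates, a $J$-discrepancy only in the first, an $I$-discrepancy only in the second), and along \eqref{4.12} this difference starts at $0$ and is changed only by the colour flips — $C_1$ flipping an $a$ of $\si$, $C_2$ an $a$ of $\si'$, each flip being $\pm1$ according to the mark $\ell_k$, the $R$-maps doing nothing. In particular, for a given mark $\ell_k$ the $C_1$-flip and (once it has occurred) the $C_2$-flip cancel in $h_a(\si)-h_a(\si')$, so after $(C_2 R')_q(C_1 R)_m$ only the marks $k>q$ contribute; writing $(P_q,S_q,I_q,J_q)$ for the quadruple then produced and
\[
R_q:=\#\{k:q<k\le m,\ \ell_k=\mathrm{right}\},\qquad L_q:=\#\{k:q<k\le m,\ \ell_k=\mathrm{left}\},
\]
this is the conservation law $|J_q|-|I_q|=L_q-R_q$, valid for all $0\le q\le m$.

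With these two facts I would prove by induction on $q$ that $|I_q|\le R_q$ and $|J_q|\le L_q$. The base case $q=0$ is immediate: during the $C_1$-phase the $R$-maps do nothing to $|I|,|J|$, each of the $R_0$ applications of $C^{\rm right}_1$ raises $|I|$ by at most $1$, and no $C^{\rm left}_1$ raises it, so $|I_0|\le R_0$, and symmetrically $|J_0|\le L_0$. For the step $q-1\to q$, assume $\ell_q=\mathrm{right}$ (the case $\mathrm{left}$ is symmetric), so $R_{q-1}=R_q+1$ and $L_{q-1}=L_q$; the map $R'_q$ is irrelevant to $|I|,|J|$ and then $C^{\rm right}_2$ acts. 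If $I_{q-1}\ne\emptyset$ then $|I_q|=|I_{q-1}|-1\le R_{q-1}-1=R_q$ and $|J_q|=|J_{q-1}|\le L_{q-1}=L_q$. If $I_{q-1}=\emptyset$ then $|I_q|=0\le R_q$ and $|J_q|=|J_{q-1}|+1$; but the conservation law with $|I_{q-1}|=0$ gives $|J_{q-1}|=L_{q-1}-R_{q-1}\le L_{q-1}-1$ because $R_{q-1}\ge1$, whence $|J_q|\le L_{q-1}=L_q$. This closes the induction, and at $q=m$ one has $R_m=L_m=0$, so $I^*=I_m=\emptyset$ and $J^*=J_m=\emptyset$, as claimed. (Feeding the resulting valid splitting $(P^*,S^*,\emptyset,\emptyset)$ of $(\und x^*,\Sigma^*)$ into Lemma \ref{lem4.1} is then, as noted, what yields Theorem \ref{teo3.1}.)

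The hard part will not be this induction but the bookkeeping underneath it, which I would settle first. One has to check that each $C$-map carries a valid splitting of $(\und x,\Sigma)$ to a valid splitting of the updated configuration; the only nonobvious points are the strict inequalities $x_i>x_j$ for the married pairs newly created in the re-pairing cases of $C_2$, and these hold because the particle being un-paired is the extremal $a$-particle of the relevant colouring and so dominates in position the discrepancy it is re-paired to — occasionally after an exchange of colours at a common site, the standard move that leaves the occupation numbers unchanged. One must also verify the sub-case tallies of all four $C$-maps precisely, in order to extract the monotonicity and conservation statements used above: routine, but it has to be exactly right, the only spot where the ``topological'' character of the interaction really enters the count being the elementary remark, used repeatedly, that a $C$-map cannot be in its ``case (c)'' when the relevant discrepancy set is empty. (That the $i_a,i_b$ invoked by the maps always exist is part of the standing setup, coming from $(\und\si,\om)\in\mathcal X_{\eps^{-2}\delta}$ together with $h_a(\si')=h_a(\si)$.)
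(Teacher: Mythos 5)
Your argument is correct and amounts to the same induction the paper runs, reorganized in a slightly cleaner way. The paper carries the balance equation $N^{\rm right}_{>q}-|I_q|=N^{\rm left}_{>q}-|J_q|\ge 0$ as a single inductive invariant, verifying both the equality and the nonnegativity from the case lists of the $C$-maps at every step. You instead pull the equality out as a conservation law: the identity $h_a(\und\si)-h_a(\und\si')=|J|-|I|$ holds for any splitting, and since each $C_1^{\ell_k}$ shifts $h_a(\und\si)$ by $\pm1$ and each $C_2^{\ell_k}$ shifts $h_a(\und\si')$ by the matching $\pm1$ (the $R$-maps, being site-local colour exchanges, change neither total), the difference after $(C_2R')_q(C_1R)_m$ depends only on the marks $\ell_{q+1},\dots,\ell_m$, giving $|J_q|-|I_q|=L_q-R_q$ without inspecting the $C$-map sub-cases at all. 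That leaves only the nonnegativity $|I_q|\le R_q$, $|J_q|\le L_q$ to prove inductively, and your case analysis of the $C_2$-maps (including the observation that case (c) is unavailable when the relevant discrepancy set is empty, and that $R_{q-1}\ge 1$ or $L_{q-1}\ge 1$ when the matching mark has not yet been consumed) is precisely the paper's. The gain is conceptual economy: the equality part of the invariant becomes a free consequence of the definitions of $H^{\rm right},H^{\rm left}$ rather than something to be re-verified in each sub-case. You also correctly flag (as the paper does not, at least not in the proof of this theorem) that one should check the $C$-maps and $R$-maps preserve the validity of the splitting, in particular the strict position ordering in newly formed married pairs; this is a genuine piece of bookkeeping underlying the whole section, but it is not a defect specific to your proof.
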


\begin{proof} Observe that the elements of $\mathcal R$ change only the sets $P$ and $S$, thus to prove the Theorem we only need to consider the $C$-maps.
For $q\le m$ we call  $I_q$ and $J_q$
the discrepancies of $( C_1 R)_q(\und x, P,S,I,J)$ and we define
\[
N^{\rm right}_{\le q} = \sum_{i=1}^q \mathbf 1_{\ell_i = \text{right}},\qquad
N^{\rm left}_{\le q} = \sum_{i=1}^q \mathbf 1_{\ell_i = \text{left}}.
\]
For $q>m$ we call $I_q$, $J_q$
the discrepancies of $( C_2 R')_{q-m}( C_1 R)_m(\und x, P,S,I,J)$ and we set
\[
N^{\rm right}_{> q} = \sum_{i=q+1}^{2m} \mathbf 1_{\ell_{i-m} = \text{right}},\qquad
N^{\rm left}_{> q} = \sum_{i=q+1}^{2m} \mathbf 1_{\ell_{i-m} = \text{left}}.
\]
 We  prove below that
  \begin{equation}
  \label{4.13}
N^{\rm right}_{\le q} - |I_{ q}| = N^{\rm left}_{\le q} - |J_{ q}| \ge 0,\quad q\le m
   \end{equation}
     \begin{equation}
  \label{4.14}
N^{\rm right}_{> q} - |I_{ q}| = N^{\rm left}_{> q} - |J_{q}|\ge 0,\quad q> m
   \end{equation}
and observe that if we put $q=2m$ in \eqref{4.14} we get $I_{2m}= J_{2m} = \emptyset$,
so that the theorem follows from \eqref{4.13}--\eqref{4.14}.

{\it Proof of \eqref{4.13}}.   \eqref{4.13} trivially holds
for $q=0$ so that proceeding by induction we
suppose that \eqref{4.13} holds
with $q-1 <m$. Take for instance   $\ell_q = $ left.  Then
 $N^{\rm left}_{\le q} =
N^{\rm left}_{\le q-1}+1$ while $N^{\rm right}_{\le q} =
N^{\rm right}_{\le q-1}$.  Recalling  the definition of $C_1^{\rm left}$, in case (a) or (b) $|J_q|=|J_{q-1}|+1$, and $|I_q|=|I_{q-1}|$; while  in case (c)
$|J_q|=|J_{q-1}|$ and $|I_q|=|I_{q-1}|-1$, thus in all cases
 \eqref{4.13} holds with $q$.  The case when $\ell_q = $ right is analogous and omitted.

{\it Proof of \eqref{4.14}}. As before we proceed by induction observing first that  \eqref{4.14} holds for $q=m$. In fact by definition $N^{\ell}_{> m}=N^{\ell}_{\le m}$  for $\ell=$ right and left. We then assume
\eqref{4.14} holds for $q-1\in(m,2m)$. Suppose for instance that $\ell_q = $ left.  Then
 $N^{\rm left}_{> q} =
N^{\rm left}_{> q-1}-1$ while $N^{\rm right}_{> q} =
N^{\rm right}_{> q-1}$. Recalling  the definition of $C_2^{\rm left}$, in case (a) or (b) if $J_{q-1}\ne \emptyset$ then $|J_q|=|J_{q-1}|-1$, and $|I_q|=|I_{q-1}|$;  if instead $J_{q-1}= \emptyset$ then $|J_q|=|J_{q-1}|$ and $|I_q|=|I_{q-1}|+1$. In  case (c)
$|J_q|=|J_{q-1}|-1$ and $|I_q|=|I_{q-1}|$, thus in all cases
 \eqref{4.14} holds with $q$.  The case when $\ell_q = $ right is analogous and omitted.     \end{proof}

\noindent {\bf Proof of Theorem \ref{teo3.1}}. Given $\om$, $\si$ and $\si'$ as in the statement of Theorem \ref{teo3.1}, we use Lemma \ref{lem4.1} to construct a splitting $(P,S,I,J)$ such that $I=J=\emptyset$. Let $m$ be such that $s_m\le \eps^{-2}\delta$ and $s_{m+1}> \eps^{-2}\delta$. 

{\it Proof of  \eqref{3.2}}.  For 
$q=1,\dots,m$ let $R_q$  be the maps corresponding to the times intervals $(s_q,s_{q+1})$  and let $R'_{1}$ be the map corresponding to the time interval $(s_m,\eps^{-2}\delta)$. Furthermore let  $R'_q$=identity for all $q=2,\dots,m$. Then \eqref{4.12}
 is a splitting of $\big(\und x(t), \si(\eps^{-2}\delta),{\si'}^{(\delta,-)}(\eps^{-2}\delta)\big)$. From Theorem \ref{teo4.2}  we then have that $I^*=J^*=\emptyset$ and thus by Lemma \ref{lem4.1} we get \eqref{3.2}.
 
{\it Proof of  \eqref{3.3}}.  We let $R_q$=identity for all $q=1,\dots,m$ and instead, for $q=1,\dots,m$,  $R'_q$  are the maps corresponding to the times intervals $(s_q,s_{q+1})$. Finally $R'_{m+1}$ is the map corresponding to the time interval $(s_m,\eps^{-2}\delta)$. Then 
\begin{equation}
 \nn
(\und x^*, P^*,S^*,I^*,J^*):=R'_{m+1}( C_2 R')_mR_{m+1}( C_1 R)_m(\und x, P,S,I,J)
   \end{equation}
 is a splitting of $\big(\und x(t),\si^{(\delta,+)}(\eps^{-2}\delta), \si'(\eps^{-2}\delta)\big)$.  Since $R'_{m+1}$ does not change the sets of discrepancies, from Theorem \ref{teo4.2}  we get that $I^*=J^*=\emptyset$ which, by Lemma \ref{lem4.1} concludes the proof of \eqref{3.3}.
\qed

\section{Proof of Theorem \ref{teo1}.}
\label{sec.3}

For any $\eps>0$ we choose an initial configuration $(\und x,\und\si)$ 
with law $P^\eps$ (as described in Section \ref{sec1}) 
and study its evolution
$(\und x(t),\und\si(t))$ for a fixed time interval $[0,T]$.  We do not have
a good knowledge of $(\und x(t),\und\si(t))$ (just that the process is well defined).
The information needed to prove  Theorem \ref{teo1} will be gained by studying two
auxiliary processes $(\und x(t),\und \si^{(\delta,\pm)}(t))$
(which start at time 0 from $(\und x,\und\si)$ as the true process) and by using the inequalities
of the previous section
to compare the true and the auxiliary processes.

Thus the first step is to
extend  the definition  of the auxiliary processes
to the whole time interval $[0,T]$.  This is done in Definition \ref{de4.1} below
by iterating the definition given in the last section 
to the intervals  $[(k-1)\eps^{-2}\delta,k\eps^{-2}\delta]$,
$k \le K$, $K$  the smallest
integer such that $K\eps^{-2}\delta \ge T$.
To this purpose we consider the set $\mathcal X_{K\eps^{-2}\delta }$ defined in the previous section (see Definition \ref{de4.0}) and we prove below that with large probability we can restrict our analysis to trajectories in $\mathcal X_{K\eps^{-2}\delta }$.
\begin{lemma}
\label{lem4.1bis}
There is a  positive constant $c$
independent of $\eps$ (but it may depend on  $\delta$ and $T$) such that
	\begin{equation}
	\label{e1}
\mathcal P^\eps\big[\mathcal X_{K\eps^{-2}\delta} \big ]\ge 1 - e^{-c\eps^{-1}}
    \end{equation}
   where $\mathcal P^\eps$ is defined in Definition \ref{def3.0}.
\end{lemma}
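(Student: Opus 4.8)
The plan is to exhibit an event of $\mathcal P^\eps$-probability at least $1-e^{-c\eps^{-1}}$ on which $N_a(s)>0$ and $N_b(s)>0$ for \emph{every} $s\in[0,K\eps^{-2}\delta]$; since $\mathcal X_{K\eps^{-2}\delta}$ of Definition~\ref{de4.0} asks for this (at least) at the final time, this yields \eqref{e1}. The quantities $N_a(\cdot),N_b(\cdot)$ depend only on $(\und\si,\om)$ and not on the motion $\und x(\cdot)$, and $N_b(s)=h_b(\und\si)-\bigl(N_a(s)-h_a(\und\si)\bigr)$ with $h_b=M-h_a$; so it is enough to (i) bound $h_a(\und\si)$ and $h_b(\und\si)$ below by a positive multiple of $\eps^{-1}$ off an exponentially small set, and (ii) show that the colour-exchange random walk $s\mapsto N_a(s)-h_a(\und\si)$ stays smaller than a small multiple of $\eps^{-1}$ uniformly on $[0,K\eps^{-2}\delta]$ off an exponentially small set.

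For (i): by \eqref{4.18} and \eqref{4.17} the pairs $(x_i,\si_i)$, $i=1,\dots,M$, are i.i.d.; combining these two formulas gives $P^\eps[\si_i=a]=p_\eps:=\sum_x u(\eps x)\big/\sum_x[u(\eps x)+v(\eps x)]$, so that $h_a(\und\si)$ is $\mathrm{Binomial}(M,p_\eps)$ and $h_b(\und\si)$ is $\mathrm{Binomial}(M,1-p_\eps)$ with $M=[\eps^{-1}M_{\rm tot}]$. By convergence of the Riemann sums, $p_\eps\to \int u/M_{\rm tot}\in(0,1)$ (strictly, since $u$ and $v$ are positive on nonempty intervals), whence $Mp_\eps\sim\eps^{-1}\int u$ and $M(1-p_\eps)\sim\eps^{-1}\int v$. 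Hoeffding's inequality for sums of independent indicators then gives, for any $\alpha<\int u$, $\beta<\int v$, and all sufficiently small $\eps$,
\[
P^\eps\bigl[h_a(\und\si)\le \alpha\eps^{-1}\bigr]\le e^{-c_1\eps^{-1}},\qquad
P^\eps\bigl[h_b(\und\si)\le \beta\eps^{-1}\bigr]\le e^{-c_2\eps^{-1}},
\]
with $c_1=c_1(\alpha)>0$, $c_2=c_2(\beta)>0$.

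For (ii): under $\mathbb P^\eps$ the process $W(s):=N_a(s)-h_a(\und\si)$ equals the number of ``left'' clock rings up to $s$ minus the number of ``right'' clock rings up to $s$; by Poisson thinning these are independent Poisson processes of rate $\eps\kappa$, so $W$ is a mean-zero martingale with $\pm1$ jumps and $\mathbb E[e^{\theta W(s)}]=e^{2\eps\kappa s(\cosh\theta-1)}$. Applying Doob's maximal inequality to the exponential martingales $\exp\bigl(\pm\theta W(s)-2\eps\kappa s(\cosh\theta-1)\bigr)$ with a fixed small $\theta>0$ (possible because $2\kappa K\delta(\cosh\theta-1)=O(\theta^2)$ while $\theta\rho$ is linear in $\theta$, so $2\kappa K\delta(\cosh\theta-1)<\theta\rho$) gives, for any $\rho>0$,
\[
\mathcal P^\eps\Bigl[\max_{0\le s\le K\eps^{-2}\delta}\bigl|N_a(s)-h_a(\und\si)\bigr|\ge\rho\,\eps^{-1}\Bigr]\le 2\,e^{-c_3\eps^{-1}},
\]
with $c_3=c_3(\rho,\kappa,K,\delta)>0$. (One could instead first bound by $e^{-c\eps^{-1}}$ the probability that the number of rings in $[0,K\eps^{-2}\delta]$ exceeds twice its mean $2\kappa K\delta\eps^{-1}$, and then apply the reflection principle to the discrete walk with a deterministic number of steps.)

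Now fix $\rho,\alpha,\beta$ with $0<\rho<\alpha<\int u$ and $0<\rho<\beta<\int v$. On the intersection of the three events above, which has probability at least $1-e^{-c_1\eps^{-1}}-e^{-c_2\eps^{-1}}-2e^{-c_3\eps^{-1}}\ge 1-e^{-c\eps^{-1}}$ for a suitable $c>0$ and $\eps$ small, one has, for every $s\in[0,K\eps^{-2}\delta]$,
\[
N_a(s)\ge h_a(\und\si)-\rho\,\eps^{-1}\ge(\alpha-\rho)\eps^{-1}>0,\qquad
N_b(s)=h_b(\und\si)-\bigl(N_a(s)-h_a(\und\si)\bigr)\ge(\beta-\rho)\eps^{-1}>0,
\]
so $(\und\si,\om)\in\mathcal X_{K\eps^{-2}\delta}$ and \eqref{e1} follows. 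The only delicate point is upgrading the weak statement recalled at the end of Section~\ref{sec1} to genuine exponential-in-$\eps^{-1}$ bounds — hence Hoeffding for $h_a,h_b$ and an exponential martingale for $W(\cdot)$ — the key reason it works being that over the diffusive window of length $\Theta(\eps^{-2})$ the colour-exchange walk makes only $\Theta(\eps^{-1})$ jumps, hence fluctuates on scale $\Theta(\eps^{-1/2})=o(\eps^{-1})$, negligible against the initial counts $h_a,h_b=\Theta(\eps^{-1})$.
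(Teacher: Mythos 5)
Your proof is correct and follows the same approach as the paper's: concentrate the initial count $h_a(\und\si)$ (the paper uses the i.i.d.\ structure and concentration of Bernoulli sums, you use Hoeffding), then observe that the color‑exchange count $N_a(\cdot)$ is a symmetric nearest‑neighbour random walk with jump rate $2\eps\kappa$ which over the window $[0,K\eps^{-2}\delta]$ makes only $O(\eps^{-1})$ jumps and hence cannot travel the $\Theta(\eps^{-1})$ distance to $0$ or $M$ except with exponentially small probability. You merely spell out, via an exponential martingale and Doob's inequality, the maximal‑fluctuation bound that the paper asserts with the word ``Thus.''
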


\begin{proof}
Call $Z=\int (u+v)$ and
$p_a:= \frac 1Z\int u \in (0,1)$.
By \eqref{4.17} and \eqref{4.18},
\[
P^\eps [\si_i = a]= \frac1{ Z_\eps}\sum_x u(\eps x),\qquad Z_\eps=\sum_x [u(\eps x)+v(\eps x)]
\]
and since  the $\si_i$ are independent variables, given $\zeta>0$ such that
$ 0< p_a-\zeta<p_a+\zeta<1$ we have for $\eps>0$ small enough
\[
P^\eps [|h_a(\si)- \eps^{-1} p_a|< \zeta ]\ge  1 - e^{-c\eps^{-1}},
\]
with $c$ a suitable positive constant.  Recalling \eqref{3.4}, the number $N_a(t)$ of $a$ particles
at time $t$ is a nearest neighbor symmetric random walk with jump  intensity $2\eps \kappa$, until the time
when $N_a(t)$ reaches either 0 or $M$.  Thus
 \[
\mathcal P^\eps [N_a(t) \in (0,M) \:\text{for all $t\le K\eps^{-2}\delta $} ]\ge  1 - e^{-c\eps^{-1}}
\]with $c$ a new suitable constant.
\end{proof}

\begin{definition}
\label{de4.1}
Chose an initial configuration $(\und x,\und \si)$ as above, fix a $(\und \si, \om)\in \mathcal X_{K\eps^{-2}\delta}$ and a trajectory $\und x(t)$, $t\le K\eps^{-2}\delta$. We call $m_k$, $k=0,\dots,K$ the positive integers such that $k\eps^{-2}\delta\le s_{m_k+1}<s_{m_k+2}\dots$$<s_{m_{k+1}}$. We also call $t_k=k\eps^{-2}\delta
$. We then define $\und \si^{(\delta,+)}(t)$ as the function left continuous with right limits obtained by setting $\und \si^{(\delta,+)}(t)=\und \si^{(\delta,+)}(t_k+)$ for $t\in(t_k,t_{k+1}]$ and 
	\begin{equation}
	\nn
\big(\und x( t_k+),\und \si^{(\delta,+)}(t_k+)\big) =  \prod_{i=m_k+1}^{m_{k+1}} H^{\ell_i}\big(\und x(t_k),\und \si^{(\delta,+)}(t_k)\big),\qquad t_k=k\eps^{-2}\delta.
  \end{equation}
 Similarly $\und \si^{(\delta,-)}(t)$ is the function right continuous with left limits obtained by setting  $\und \si^{(\delta,-)}(t)=\und \si^{(\delta,-)}(t_k)$ for all $t\in[t_k,t_{k+1})$, while at $t_{k+1}=(k+1)\eps^{-2}\delta$ 
 	\begin{equation}
	\nn
	\big(\und x(t_{k+1}),\und \si^{(\delta,-)}(t_{k+1})\big) =
 \prod_{i=m_k+1}^{m_{k+1}}  H^{\ell_i}\big(\und x(t_{k+1}),\und \si^{(\delta,-)}(t_{k}) \big).
  \end{equation}

\end{definition}
An immediate corollary of Theorem \ref{teo3.1} is

\begin{corollary}
In $\mathcal X_{K\eps^{-2}\delta} $ setting $t_k=k\eps^{-2}\delta
$ we have for all $k \le K$
	\begin{equation}
	\label{e2}
 \big(\und x(t_k), {\si}^{(\delta,-)}(t_k)\big)
 \preccurlyeq \big(\und x(t_k), \si(t_k)\big)
 \preccurlyeq \big(\und x(t_k), \si^{(\delta,+)}(t_k)\big),
	\end{equation}
where all the above evolutions start from the same initial datum
$(\und x, \und \si)$.

\end{corollary}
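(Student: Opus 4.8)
The natural plan is induction on $k$, invoking Theorem \ref{teo3.1} on each block $[t_k,t_{k+1}]$. For $k=0$ all three evolutions equal the common datum $(\und x,\und\si)$, so \eqref{e2} holds by reflexivity of $\preccurlyeq$. Suppose \eqref{e2} holds at $t_k$. I would restrict the data to the block $[t_k,t_{k+1}]$ and shift time by $-t_k$: reading off Definition \ref{de4.1}, the auxiliary $(\delta,\pm)$-evolutions on this block are precisely the one-block auxiliary evolutions of \eqref{4.15}--\eqref{4.16}, started at the configuration present at $t_k$ (namely $\si^{(\delta,+)}(t_k)$, respectively $\si^{(\delta,-)}(t_k)$), and the true evolution satisfies the obvious flow property (evolving $\und\si$ up to $t_{k+1}$ equals evolving it up to $t_k$ and then continuing). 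Hence it is enough to check the hypotheses of Theorem \ref{teo3.1} on the block and to read its conclusion back through these identifications.

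For the left inequality I would apply \eqref{3.2} on the block with $\und\si:=\si(t_k)$ and $\und\si':=\si^{(\delta,-)}(t_k)$: the relation $(\und x(t_k),\si^{(\delta,-)}(t_k))\preccurlyeq(\und x(t_k),\si(t_k))$ is the inductive hypothesis, and one must still verify $h_a(\si^{(\delta,-)}(t_k))=h_a(\si(t_k))$ and $(\si(t_k),\om|_{[t_k,t_{k+1}]})\in\mathcal X_{\eps^{-2}\delta}$. Both are consequences of $(\und\si,\om)\in\mathcal X_{K\eps^{-2}\delta}$: on that event (as recorded in the proof of Lemma \ref{lem4.1bis}) the count $N_a(\cdot)$ of \eqref{3.4} stays strictly between $0$ and $M$ on all of $[0,K\eps^{-2}\delta]$, so no scheduled color change is ever wasted in any of the three evolutions, all of them therefore perform the same sequence of effective updates, and each carries $N_a(t_k)=h_a(\und\si)+\sum_{i\le m_k}(\mathbf 1_{\ell_i={\rm left}}-\mathbf 1_{\ell_i={\rm right}})$ particles of species $a$ at time $t_k$; in particular the free walk, restarted from $N_a(t_k)\in(0,M)$, stays in $(0,M)$ on the block, giving the required membership in $\mathcal X_{\eps^{-2}\delta}$. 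Then \eqref{3.2}, together with the identification of the block $(\delta,-)$-evolution at the right endpoint with $\si^{(\delta,-)}(t_{k+1})$, gives $(\und x(t_{k+1}),\si^{(\delta,-)}(t_{k+1}))\preccurlyeq(\und x(t_{k+1}),\si(t_{k+1}))$.

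Symmetrically, for the right inequality I would apply \eqref{3.3} on the block with $\und\si:=\si^{(\delta,+)}(t_k)$ and $\und\si':=\si(t_k)$; the hypotheses $(\und x(t_k),\si(t_k))\preccurlyeq(\und x(t_k),\si^{(\delta,+)}(t_k))$ and $h_a(\si(t_k))=h_a(\si^{(\delta,+)}(t_k))$ are again the inductive hypothesis and the count identity above, and \eqref{3.3} yields $(\und x(t_{k+1}),\si(t_{k+1}))\preccurlyeq(\und x(t_{k+1}),\si^{(\delta,+)}(t_{k+1}))$. Chaining the two inequalities closes the induction. The one substantive point --- and the only place where restricting to $\mathcal X_{K\eps^{-2}\delta}$ is genuinely needed --- is the claim that the true, the $(\delta,+)$- and the $(\delta,-)$-evolutions all carry the same number of $a$-particles at every breakpoint $t_k$; outside that event a scheduled $H^{\rm right}$ or $H^{\rm left}$ could act effectively in one evolution but be absorbed in another, the $a$-counts would drift apart, and Theorem \ref{teo3.1} could no longer be applied block by block. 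Everything else is bookkeeping of the time shifts in Definition \ref{de4.1} and of the flow property.
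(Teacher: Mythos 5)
Your argument is correct and takes essentially the same approach as the paper: induction on $k$, applying Theorem \ref{teo3.1} block by block, with the key observation (which the paper states in one line) that on $\mathcal X_{K\eps^{-2}\delta}$ all three evolutions carry the same number of $a$-particles at every breakpoint $t_k$, since the increment over each block is determined by the marks alone and no color change is wasted. The paper's proof is more compressed, but you have supplied exactly the bookkeeping it leaves implicit.
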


\begin{proof} The number  $N_a(k\eps^{-2}\delta)$ of $a$ particles at time $k\eps^{-2}\delta$ is the same in all the three evolutions. This is evidently true for $k=0$ because they all start from the same configuration and the claim follows because
$$N_a((k+1)\eps^{-2}\delta)-N_a(k\eps^{-2}\delta)=\sum^{m_{k+1}}_{i=m_k+1} \Big( \mathbf 1_{\ell_i=\text{right}} -
\mathbf 1_{\ell_i=\text{left}}\Big).
$$
The corollary then follows from Theorem
\ref{teo3.1}.
\end{proof}

Next step is to prove that $(\und x(k\eps^{-2}\delta), {\si}^{(\delta,\pm)}(k\eps^{-2}\delta))$ have a limit as $\eps\to 0$.  The limit
will be described by the following macroscopic evolutions:

\begin{definition}
\label{def5.1}
For $u,v\in L_1(\mathbb R,\mathbb R_+)$ and $\delta>0$ let $R_\delta(u)$ and $D_\delta(v)$ be such that
	\begin{eqnarray}
\int_{R_\delta(u)}^\infty u(r)dr=\kappa\delta,\qquad \int^{D_\delta(v)}_{-\infty} v(r)dr=\kappa\delta.
	\end{eqnarray}
We define $K^{(\delta)}(u,v)=(u',v')$ with
   \begin{eqnarray}
&&\nn
 u' (r)=
\mathbf 1_{(-\infty,R_\delta(u)]}(r)u(r)+\mathbf 1_{(-\infty,D_\delta(v)]}(r)v(r),
 \\\label{4.2k}
  \\&&\nn
 v' (r)= \mathbf 1_{[D_\delta(v),+\infty)}(r)v(r)+
\mathbf 1_{[R_\delta(u),+\infty)}(r)u(r).
	 \end{eqnarray}
Denote by  $G_t\star u$ the convolution of the Gaussian kernel with a function $u$:
	\begin{equation}
	\label{5.3}
G_t(r,r') =  \frac{e^{-(r-r')^2/2t}}{\sqrt{2\pi t}},\qquad G_t\star u=\int G_t(r,r')u(r')dr'.
\end{equation}
With an abuse of notation we write $G_t \star (u,v)\equiv (G_t\star u,G_t\star v)$.
We define
the ``barriers''  $S_{n\delta}^{(\delta,\pm)}(u,v) $,  $n\in \mathbb N$,
by setting
$S^{(\delta,\pm)}_{0}(u,v)=(u,v)$, and $\forall n\ge1$
\begin{eqnarray}
\nn
&&
S^{(\delta,-)}_{n\delta}(u,v)= K^{(\delta)} G_\delta*S^{(\delta,-)}_{(n-1)\delta}(u,v),
\\&&
S^{(\delta,+)}_{n\delta}(u,v)= G_\delta * K^{(\delta)} S^{(\delta,{+})}_{(n-1)\delta}(u,v).
\label{5.4}
\end{eqnarray}
We denote by $(u^{(\delta,\pm)}_{n\delta},v^{(\delta,-)}_{n\delta}) = S^{(\delta,\pm)}_{n\delta}(u,v)$.
\end{definition}

\begin{thm}
\label{thme1}

 For any $k \le K$ and any $\delta$ small enough
 \[
\eps  \xi_{(\und x(k\eps^{-2}\delta), \und \si^{(\delta,\pm)}(k\eps^{-2}\delta))} \to S_{k\delta}^{(\delta,\pm)}(u,v)
 \]
as $\eps\to 0$ weakly in probability.

\end{thm}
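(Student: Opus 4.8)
The plan is to prove the convergence by induction on $k$, exploiting the fact that the auxiliary processes $(\und x(t),\und\si^{(\delta,\pm)}(t))$ are, between the update times $t_k=k\eps^{-2}\delta$, nothing but systems of independent random walks with colors frozen. The base case $k=0$ is exactly the initial-condition statement recalled after \eqref{4.17}, namely $\eps\,\xi_{(\und x,\und\si)}\to u$ and $\eps\,\eta_{(\und x,\und\si)}\to v$ weakly in probability, so $\eps\,\xi_{(\und x,\und\si^{(\delta,\pm)}(0))}\to u = S_0^{(\delta,\pm)}(u,v)$. For the inductive step I would assume $\eps\,\xi_{(\und x(t_k),\und\si^{(\delta,\pm)}(t_k))}\to u^{(\delta,\pm)}_{k\delta}$ (and, via the heat equation for the total density recalled in the discussion of Theorem \ref{teo1}, also $\eps\,\eta_{(\und x(t_k),\und\si^{(\delta,\pm)}(t_k))}\to v^{(\delta,\pm)}_{k\delta}$, since the sum of the two is governed by the linear heat flow and the $\xi$-part converges) and then show the limit propagates to $t_{k+1}$ in two moves matching the two-step structure of $S^{(\delta,\pm)}$ in \eqref{5.4}: a color-reshuffle step realized by the operator $K^{(\delta)}$, and a diffusion step realized by convolution with $G_\delta$.

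For the diffusion step: on the interval $(t_k,t_{k+1})$ the colors do not change in either auxiliary process (all the $H^{\ell_i}$'s for $m_k<i\le m_{k+1}$ are applied instantaneously, at $t_k+$ in the $+$ case, at $t_{k+1}$ in the $-$ case), so each of $\xi$ and $\eta$ evolves as a tagged subpopulation of $M$ independent symmetric random walks run for time $\eps^{-2}\delta$. By the standard local CLT / propagation-of-chaos argument for independent random walks (as in \cite{dp1}), if $\eps\,\xi_{(\und x(t_k'),\cdot)}\to f$ weakly in probability at the start of such an interval then $\eps\,\xi$ at the end converges to $G_\delta\star f$. This is where the "convolution with $G_\delta$" in \eqref{5.4} comes from. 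For the color-reshuffle step: $K^{(\delta)}$ in Definition \ref{def5.1} takes the mass of $u$ to the right of the level $R_\delta(u)$ (total mass $\kappa\delta$) and reassigns it to $v$, and symmetrically moves the leftmost $\kappa\delta$ of $v$'s mass into $u$. Microscopically, applying the product $\prod_{i=m_k+1}^{m_{k+1}}H^{\ell_i}$ recolors a net number of particles equal to $\sum_{m_k<i\le m_{k+1}}(\mathbf 1_{\ell_i=\mathrm{right}}-\mathbf 1_{\ell_i=\mathrm{left}})$ of the currently rightmost $a$-particles into $b$ (if this is positive) or the leftmost $b$-particles into $a$ (if negative). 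By the law of large numbers for the Poisson/Bernoulli clocks, $\eps$ times this net count converges to $\kappa\delta$ in probability (intensity $2\eps\kappa$ over a window of length $\eps^{-2}\delta$, with Bernoulli$(1/2)$ marks, gives mean $\eps^{-1}\kappa\delta$ and concentration by the same exponential bound used in Lemma \ref{lem4.1bis}); hence the recoloring strips exactly a macroscopic mass $\kappa\delta$ off the right tail of $\eps\,\xi$ and adds it to $\eps\,\eta$, and symmetrically — which is precisely the action of $K^{(\delta)}$ on the limiting profiles. One must be slightly careful that the "rightmost $a$-particles" are genuinely recolored one after another, i.e. that after recoloring the current rightmost $a$-particle the next one targeted is the new rightmost; this is built into the definitions of $i_a,i_b$ and $H^{\mathrm{right}},H^{\mathrm{left}}$, and on $\mathcal X_{K\eps^{-2}\delta}$ we never exhaust either species, so the procedure is well-defined throughout. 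Assembling: one step of the $(\delta,-)$-dynamics is "diffuse for time $\delta$, then recolor" $=K^{(\delta)}G_\delta\star(\cdot)$, and one step of the $(\delta,+)$-dynamics is "recolor, then diffuse" $=G_\delta\star K^{(\delta)}(\cdot)$, matching \eqref{5.4} exactly.

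The main obstacle I anticipate is the continuity/robustness of the map $K^{(\delta)}$ under weak convergence together with the interchange of the recoloring and the microscopic fluctuations. Concretely: one knows $\eps\,\xi$ converges weakly in probability to a continuous profile, but to conclude that the recolored configuration has the right limit one needs that the microscopic cut-level — the position of the $(\text{net count})$-th rightmost $a$-particle — converges to the macroscopic level $R_\delta(\cdot)$ defined by $\int_{R_\delta(u)}^\infty u=\kappa\delta$. This requires that $R_\delta$ depend continuously on the profile in the relevant topology, which uses the hypothesis $(u,v)\in\mathcal U$, i.e. that the profiles are strictly positive on their (interval) supports so the defining integral equation has a stable unique solution; one also needs to know a priori that the iterated barriers $S^{(\delta,\pm)}_{n\delta}(u,v)$ stay in a class where this positivity/regularity is preserved, at least for $\delta$ small and $n\delta$ bounded, which is why the statement carries the qualifier "for any $\delta$ small enough." The diffusion step cannot by itself destroy positivity on a bounded region, and $K^{(\delta)}$ only chops a mass $\kappa\delta$ off a tail, so for $\delta$ small the supports move by $O(\delta)$ and the structure is preserved over $[0,T]$; making this quantitative is the one genuinely technical point, and everything else is the standard independent-random-walks hydrodynamics plus a concentration estimate for the color clocks.
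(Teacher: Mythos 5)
The paper does not actually give a proof of this theorem. Immediately after the statement the authors remark that the auxiliary processes are ``essentially independent random walk evolutions with an additional colors change at finitely many times'', that the random-walk convergence can be established in a strong enough form to control the extreme particle positions, and that ``the argument is rather lengthy but essentially analogous to that in \cite{CDGP1} and for brevity we omit it.'' Your overall architecture---inducting over the mesh times $t_k=k\eps^{-2}\delta$, obtaining the $G_\delta\star$ factor from free random-walk hydrodynamics on each open subinterval, obtaining the $K^{(\delta)}$ factor from the concentration of the Poisson/Bernoulli clocks, and flagging the stability of the cut-level $R_\delta(\cdot)$ and preservation of the class $\mathcal U$ as the real technical work---matches this sketch.

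There is, however, a genuine error in your description of the recoloring step. You assert that $\prod_{i=m_k+1}^{m_{k+1}}H^{\ell_i}$ recolors a \emph{net} number of particles equal to $\sum_i(\mathbf 1_{\ell_i=\mathrm{right}}-\mathbf 1_{\ell_i=\mathrm{left}})$, stripping only one tail according to the sign of this sum, and that $\eps$ times this net count converges to $\kappa\delta$. Neither claim is correct. For a configuration compatible with $\mathcal U$ the rightmost $a$-particle sits near $R$ and the leftmost $b$-particle sits near $D<R$, so a particle freshly recolored at one tail is never the next target at the other tail: the $H^{\mathrm{right}}$'s and $H^{\mathrm{left}}$'s act on disjoint tails and do not cancel. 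If $N_R$ marks in the batch are ``right'' and $N_L$ are ``left'', the batch strips the rightmost $N_R$ of the $a$-particles \emph{and} the leftmost $N_L$ of the $b$-particles. By the LLN each of $\eps N_R$ and $\eps N_L$ \emph{separately} tends to $\kappa\delta$ (each is a thinned Poisson count with mean $\eps^{-1}\kappa\delta$), while the difference $\eps(N_R-N_L)$ has mean zero and tends to $0$. It is these two separate counts that realize the two cuts $R_\delta$ and $D_\delta$ in $K^{(\delta)}$. Taken literally, your ``net count'' mechanism would describe an operator that cuts only one tail by a vanishing mass, which is not $K^{(\delta)}$. Your closing sentence---strips $\kappa\delta$ off the right tail of $\eps\xi$ and adds it to $\eps\eta$, and symmetrically---is the correct statement of what happens, but the intermediate reasoning you gave does not support it and should be replaced by the two separate LLN statements for $N_R$ and $N_L$.
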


The auxiliary processes are essentially independent random walk evolutions
with an additional colors change at finitely many times, $k\eps^{-2}\delta$, $k\le K$.
The convergence of the random walk evolutions can be established in a very strong form
which allows to control the positions of the rightmost $a$ and leftmost $b$ particles.
The argument is rather lengthy but essentially analogous to that in \cite{CDGP1} and for
brevity we omit it.

\begin{thm}
\label{teor4.4}
There exist continuous functions $\bar u(r,t),\bar v(r,t)$, $r\in \mathbb R$, $t\in [0,T)$, also
denoted by $(\bar u(r,t),\bar v(r,t))=S_t(u,v)$ such that $S_0(u,v)=(u,v)$ and for any $t\in [0,T)$:
  \begin{equation}
  	\label{9.1}
\lim_{n\to \infty}  S_{2^{-n}t}^{(\delta,\pm)}(u,v) =S_t(u,v),
	\end{equation}
uniformly in the compacts and in $L^1$.

\end{thm}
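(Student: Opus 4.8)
The plan is to transfer the whole question to the cumulative profiles $F(r;u)=\int_r^\infty u$ (for which the order $\preccurlyeq$ of Definition~\ref{def4.1} is simply pointwise domination) and to use two structural facts about the operators of Definition~\ref{def5.1}. First, $K^{(\delta)}$ leaves $u+v$ invariant pointwise and conserves $\int u$ and $\int v$ separately; hence along either barrier scheme the total mass $w:=u+v$ decouples and evolves by the linear heat flow, $w_{n\delta}=G_{n\delta}\star w$, and the constant $c:=\int v-\kappa\delta$ is the same at every step. Comparing two barriers thus reduces to comparing the single scalar field $F(\cdot;u)$, on which $K^{(\delta)}$ acts by the explicit formula $F\mapsto (F-\kappa\delta)_+ +\big(F(\cdot;w)-F-c\big)_+$ and $G_\delta$ acts by $F\mapsto G_\delta\star F$; both maps are nonexpansive in $L^1$ ($G_\delta\star$ because it is an average, the cut because its derivative in $F$ lies in $\{-1,0,1\}$). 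Second, a single elementary step is comparable in the two orders, $K^{(\delta)}G_\delta(u,v)\preccurlyeq G_\delta K^{(\delta)}(u,v)$, which is immediate from Jensen's inequality for the convex maps $x\mapsto(x-\kappa\delta)_+$, $x\mapsto(x-c)_+$ and the averaging operator $G_\delta\star$; this is the macroscopic shadow of the microscopic inequality \eqref{e2}.

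Granting this, I would first prove that the two families are monotone and mutually sandwiched: $S^{(\delta,-)}_{n\delta}\preccurlyeq S^{(\delta,+)}_{n\delta}$ for every $n$, and, along the dyadic refinement in which the mesh is $2^{-n}t$ and the number of steps $2^n$ (so that the time reached is $t$), the lower barriers increase, the upper barriers decrease, and every lower barrier lies below every upper barrier. The identity $\big(K^{(\delta/2)}\big)^2=K^{(\delta)}$ (removing the rightmost $\kappa\delta$ of mass in one bite or in two bites of $\kappa\delta/2$ produces the same configuration), together with $\big(G_{\delta/2}\big)^2=G_\delta$, reduces each of these comparisons to a single insertion of the one–step inequality between $L^1$–nonexpansive maps, routed through the exact intertwining $G_\delta\circ\big(K^{(\delta)}G_\delta\big)=\big(G_\delta K^{(\delta)}\big)\circ G_\delta$ — note that the cut map is not itself monotone, which is why the iteration must go through the intertwining rather than through monotonicity of the individual blocks; alternatively, and more transparently, these inequalities follow from the coupling argument of Section~\ref{sect4}, extended to compare the auxiliary particle processes on the grids of mesh $\delta$ and $\delta/2$, combined with Theorem~\ref{thme1}. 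Being monotone and uniformly bounded ($0\le F(\cdot;u)\le\int u$, with tails dominated by $w_t$), the cumulative functions converge pointwise, hence — being monotone limits of equicontinuous, monotone-in-$r$ functions — locally uniformly, to limits $F^{-\infty}\le F^{+\infty}$; and since every density in the construction is dominated by $w_t$, has total variation at most $2\|w_t\|_\infty$, and the density jump produced by a cut vanishes with $\delta$, the densities themselves converge in $L^1$ and uniformly on compacts to a continuous pair $(\bar u(\cdot,t),\bar v(\cdot,t))$ with $\bar u,\bar v\ge0$, $\bar u+\bar v=w_t$, equal to $(u,v)$ at $t=0$.

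The remaining step, and the real obstacle, is to show that the two limits coincide, $F^{-\infty}=F^{+\infty}$. Writing $g_n$ for the difference of the two cumulative barriers at time $t$ with mesh $2^{-n}t$, dyadic monotonicity makes $g_n\ge0$ nonincreasing in $n$, and $g_n(r)\to0$ as $r\to\pm\infty$ because $\int u$ is conserved by both schemes; hence $\|g_n\|_1\downarrow\|g_\infty\|_1$ and it suffices to prove $\|g_n\|_1\to0$. The mechanism is of Trotter type: using the intertwining above one telescopes $G_\delta\star(\text{lower barrier})$ and the upper barrier into images of $G_\delta\star F(\cdot;u)$ and of $F(\cdot;u)$ under the \emph{same} word of $L^1$–nonexpansive maps, which would give $\|g_n\|_1\le C\,\|F(\cdot;u)-G_\delta\star F(\cdot;u)\|_1\le C\,\mathbb E|N(0,\delta)|\,\|u\|_1=O(\sqrt\delta)$. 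The subtlety that makes this delicate — and which I expect to be the heart of the proof — is that $K^{(\delta)}$ is not a fixed map but depends on the instantaneous total–mass profile through $F(\cdot;w_t)$, so the two schemes intertwine only up to a one–step shift of this time index, and the crude bound for the accumulated mismatch is $O(\sqrt\delta)$ per step over $t/\delta$ steps, which does not close. One controls it either by working throughout with the already monotone quantity $\|g_n\|_1$, so that the mismatch need only be summed along the dyadic hierarchy (where it contributes $O(2^{-n/2})$ at level $n$, hence is summable), or by exploiting that $w_t$ solves the heat equation, so that the discrepancy between $K^{(\delta)}$ evaluated at $w_{k\delta}$ and at $w_{(k-1)\delta}$ is governed by $\|G_\delta\star w_{(k-1)\delta}-w_{(k-1)\delta}\|_1$, which telescopes against the heat semigroup instead of accumulating. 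Once $F^{-\infty}=F^{+\infty}$, one sets $S_t(u,v):=(\bar u(\cdot,t),\bar v(\cdot,t))$; the identity \eqref{9.1} and the uniformity of the convergence then follow from the bounds already assembled, and continuity of $(r,t)\mapsto S_t(u,v)$ from the heat smoothing of $w_t$ together with the vanishing of the free–boundary jumps.
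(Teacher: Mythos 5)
The paper does not prove Theorem \ref{teor4.4}: it cites Section~8 of \cite{CDGP1} and carries over the result from there. So there is no in-paper proof to match; what can be assessed is whether your sketch would plausibly supply one.

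Your overall scaffolding is the right shape (pass to cumulatives $F(\cdot;u)$, identify the cut as $F\mapsto(F-\kappa\delta)_++(F(\cdot;w)-F-c)_+$ with $w$ decoupled and evolving by pure heat flow, get the one-step Jensen inequality $K^{(\delta)}G_\delta\preccurlyeq G_\delta K^{(\delta)}$, observe $(K^{(\delta/2)})^2=K^{(\delta)}$ and $(G_{\delta/2})^2=G_\delta$, and use nonexpansiveness of both blocks in $L^1$). But you misstate one point that is actually crucial: you assert that ``the cut map is not itself monotone, which is why the iteration must go through the intertwining rather than through monotonicity of the individual blocks.'' The paper's Lemma~\ref{lem5.1} proves precisely the opposite within the relevant class: for two pairs lying in the same $\mathcal B(\phi,M_0)$ (same $u+v$ and same masses), and with the separation property inherited from $\mathcal U$, $K_\delta$ is nondecreasing in $\preccurlyeq$. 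Your pointwise formula $\phi(F)=(F-\kappa\delta)_++(F_w-F-c)_+$ has slope in $\{-1,0,1\}$ and so looks non-monotone in the abstract, but the slope $-1$ region (where $F_v>c$) and the slope $+1$ region (where $F>\kappa\delta$) are disjoint for the configurations that actually appear (the cut points satisfy $\mathcal D\le\mathcal D'\le\mathcal R'\le\mathcal R$), and that is exactly what Lemma~\ref{lem5.1} exploits. This matters because the dyadic sandwich (lower barriers increase, upper barriers decrease, lower below upper) is then obtained by direct iteration: apply the one-step inequality to the innermost pair, then propagate outward by the monotonicity of $K_\delta$ and $G_\delta$, both legitimately applied because at matching times the two schemes share the same $w$ profile. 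There is no need for the intertwining route you propose, and in fact it is that detour that creates the accumulation problem you then have to fight.

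The genuine gap is the final step, and you say so yourself: ``which I expect to be the heart of the proof.'' You first show that the naive Trotter bound contributes $O(\sqrt\delta)$ per step over $t/\delta$ steps and ``does not close,'' and then gesture at two possible repairs (summability along the dyadic hierarchy, or telescoping against the heat semigroup) without carrying either through. Neither gesture is a proof. As written, you establish that the sandwiched dyadic limits $F^{-\infty}\le F^{+\infty}$ exist, but you do not establish $F^{-\infty}=F^{+\infty}$, which is the entire content of \eqref{9.1}. Note also that one cannot shortcut this by invoking Theorem~\ref{thm5.1} and the regular solution to squeeze the barriers together: Theorem~\ref{teor4.4} must hold unconditionally (it is used in the proof of Theorem~\ref{teo1}), whereas the regular solution of \eqref{eq11} is only assumed to exist in Section~\ref{sec5}. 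So the coincidence of the two limits needs an argument intrinsic to the barrier scheme; that argument lives in \cite{CDGP1} and is missing from your proposal.
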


We refer to  Section 8 of \cite{CDGP1} where an analogous statement has been proved.
Fix $t$, by \eqref{e2}  and Theorem \ref{thme1} with  $\delta=2^{-n}t$, for any $r\in \mathbb R$, in probability
	\begin{equation}
	\label{e3}
\limsup_{\eps\to 0}\eps\sum_{y \ge \eps^{-1}r}\xi_{( \und x( \eps^{-2}2^{-n}t), \und \si( \eps^{-2}2^{-n}t))}(y) \le \int_{r}^{+\infty} u^{(2^{-n}t,+)}_t,
	\end{equation}
	\begin{equation}
	\label{e4}
\int_{r}^{+\infty} u^{(2^{-n}t,-)}_t \le \liminf_{\eps\to 0}\eps\sum_{y \ge \eps^{-1}r}\xi_{( \und x( \eps^{-2}2^{-n}t), \und \si( \eps^{-2}2^{-n}t))}(y).
	\end{equation}
Theorem \ref{teo1} then follows because by \eqref{9.1}, the integrals in \eqref{e3}--\eqref{e4} converge   as $n\to \infty$ to the same limit
$\dis{ \int_{r}^{+\infty} \bar u(r',t)dr'}$. Details are omitted.

\section{Macroscopic inequalities.}
	\label{sec5}
	
In this section
we assume that for some $S>0$ there exists a solution 
$(\upmu(\cdot,t),U_t)$, $(\upnu(\cdot,t),V_t)$, $t\in[ 0,S]$
of the free boundary problem \eqref{eq11}. We assume that this solution is regular in the sense specified before Theorem \ref{teo2}.

 The main result of this section is Theorem \ref{thm5.1} below that states that, modulo an error exponentially small in $\delta$, 
 $(\upmu(\cdot,t),\upnu(\cdot,t))$ is in between the barriers $S^{(\delta,\pm)}_{n\delta}\big(\upmu_0,\upnu_0\big)\equiv (u^{(\delta,\pm)}_{n\delta},v^{(\delta,\pm)}_{n\delta})$, $\upmu_0=\upmu(\cdot,0)$,$\upnu_0=\upnu(\cdot,0)$. The inequalities are the macroscopic analogue of the microscopic ones.

	\begin{thm}
\label{thm5.1}
There is $\delta_0$ so that  the following holds. There are constants $c$ and $c'$ so that for all $\delta<\delta_0$, for all  $k\le \delta^{-1}S$
and for all $r\in \mathbb R$ we have
\begin{equation}
F(r;u_{k\delta}^{(\delta,-)})-kc'e^{- c\delta^{-1}}\le  F(r;\upmu(\cdot, k\delta))\le
F(r;u_{k\delta}^{(\delta,+)})+kc'e^{- c\delta^{-1}},
\label{eq5.20}
	\end{equation}
where 	$F(r;g)=\int_r^{+\infty} g$.
	\end{thm}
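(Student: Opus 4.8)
The plan is to prove \eqref{eq5.20} by induction on $k$, showing that the exact solution of \eqref{eq11} is sandwiched between the discrete barriers $S^{(\delta,\pm)}_{n\delta}$ up to an error that accumulates linearly in the number of steps. The barriers are built from the two operations $G_\delta\star(\cdot)$ (free heat flow for time $\delta$) and $K^{(\delta)}$ (moving a mass $\kappa\delta$ across each free boundary). The strategy is to check that one step of the PDE evolution, run for time $\delta$, produces a configuration whose $F$-function differs from one application of $G_\delta\star K^{(\delta)}$ (resp.\ $K^{(\delta)}G_\delta\star$) to the configuration at the start of the step by at most $c'e^{-c\delta^{-1}}$, and then propagate this through the Duhamel/monotonicity structure.

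First I would set up the one-step comparison. Fix a time $t$ and let $(\upmu(\cdot,t),\upnu(\cdot,t))$ be the exact solution; I want to compare $(\upmu(\cdot,t+\delta),\upnu(\cdot,t+\delta))$ with $G_\delta\star K^{(\delta)}(\upmu(\cdot,t),\upnu(\cdot,t))$ from above and with $K^{(\delta)}G_\delta\star(\upmu(\cdot,t),\upnu(\cdot,t))$ from below. The natural tool is the Duhamel representation of \eqref{eq11}: on $r<U_s$, $\upmu(\cdot,t+\delta)$ equals the heat flow of $\upmu(\cdot,t)$ plus $\kappa\int_t^{t+\delta}G_{t+\delta-s}(\cdot,V_s)\,ds$ coming from the delta source, minus the mass that exits through the moving Dirichlet boundary at $U_s$, where the flux is pinned to $\kappa$ by the condition $-\tfrac12\upmu_r(U_s^-,s)=\kappa$. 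Integrating the flux over $[t,t+\delta]$ gives exactly $\kappa\delta$ mass removed near the right boundary and $\kappa\delta$ mass added near $V_t$ — which is precisely what $K^{(\delta)}$ does — up to the discrepancy caused by the boundary and the source point moving during the interval rather than being frozen at their time-$t$ locations. Since $U_s,V_s\in C^1$, they move by $O(\delta)$ over the step, so the mass misplaced by freezing them is $O(\delta^2)$ in $L^1$; but here one must be careful, because the claimed error is exponentially small in $\delta$, not merely $O(\delta^2)$. The resolution, exactly as in \cite{CDGP1}, is that the \emph{directional} inequality only needs to go one way at each end: postponing the color change (the $(\delta,+)$ barrier applies $K^{(\delta)}$ before diffusing, i.e.\ removes the outgoing mass too early and injects the incoming mass too early) should give an over-estimate of $F(r;\upmu)$ for the upper bound and the $(\delta,-)$ barrier an under-estimate, with the two-sided error being controlled not by the $O(\delta^2)$ displacement but by the tail of the Gaussian — the genuinely small term comes from estimating quantities like $\mathcal P^\eps(\mathcal X^c)$-type events or, at the macroscopic level, from the fact that the profiles stay in $\mathcal U$ with $\upmu,\upnu$ bounded and the relevant corrections involve $G_\delta$ evaluated at a macroscopic distance, i.e.\ $e^{-c/\delta}$.

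Second, I would iterate. Writing $\Phi^{\pm}_\delta$ for the two barrier maps, one shows $F(r;\Phi^-_\delta(g))-c'e^{-c\delta^{-1}}\le F(r;(\text{exact flow for time }\delta)(g))\le F(r;\Phi^+_\delta(g))+c'e^{-c\delta^{-1}}$ whenever $g$ is a profile in $\mathcal U$ close to the true solution; combined with the fact that both $\Phi^\pm_\delta$ and the exact flow are \emph{order preserving} for the $\preccurlyeq$/$F$-ordering (this is where one needs that $K^{(\delta)}$ and $G_\delta\star$ are monotone in $F$, and that the free-boundary flow is too — a maximum-principle statement for \eqref{eq11}), a telescoping argument gives the accumulated bound $k c' e^{-c\delta^{-1}}$ after $k$ steps. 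One subtlety: to run the induction I need the intermediate barrier profiles $u^{(\delta,\pm)}_{j\delta}$ themselves to stay in (a neighborhood of) $\mathcal U$ for all $j\le\delta^{-1}S$, so that the one-step estimate keeps applying; this follows because they stay $O(k e^{-c/\delta})$-close to $\upmu(\cdot,j\delta)$, which is in $\mathcal U$ by the regularity assumption, and $ke^{-c/\delta}\le \delta^{-1}Se^{-c/\delta}\to0$.

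I expect the \textbf{main obstacle} to be the one-step estimate at the free boundary — making rigorous that the exact flow over time $\delta$, with its \emph{moving} Dirichlet boundary and moving delta-source, is comparable to the frozen-in-time barrier operation with only an $e^{-c/\delta}$ two-sided error. This requires a careful Duhamel bookkeeping of the boundary flux together with the $C^1$ regularity of $U_t,V_t$ and uniform bounds on $\upmu,\upnu$ and their gradients near the boundary, plus the right way of splitting the removed/injected mass so that the $O(\delta^2)$ geometric errors cancel in the directional inequalities and only the Gaussian-tail term survives. The monotonicity (comparison principle) for the free-boundary system \eqref{eq11} is the second delicate point, since \eqref{eq11} is not a standard parabolic equation; but here I would lean on the regularity hypothesis and argue as in \cite{CDGP1}, reducing monotonicity of the flow to monotonicity of its barrier approximants via Theorem \ref{teor4.4}.
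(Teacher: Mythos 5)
Your high-level architecture matches the paper's (one-step comparison at the time scale $\delta$, then iterate using monotonicity of the barrier maps), and you correctly locate the origin of the $e^{-c/\delta}$ error: it is a Gaussian-tail estimate driven by the fact that $U_t - V_t$ is bounded away from zero, so the probability that a Brownian path started at one boundary reaches the other within time $\delta$ is exponentially small. You also correctly anticipate that the one-step estimate at the moving free boundary is the hard analytic point. That part of your outline is sound.

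There is, however, a genuine gap in the way you propose to iterate. You want to push the inductive hypothesis, which is a comparison ``modulo $km$'', through one more application of $S^{(\delta,\pm)}_\delta$ using monotonicity of the barrier maps. But the monotonicity the paper actually proves (Lemma~\ref{lem5.1}) holds only for the exact ordering $\preccurlyeq$, i.e.\ $m=0$. It is not true in any direct sense that $K^{(\delta)}$ is monotone ``modulo $m$'': $K^{(\delta)}$ removes a prescribed mass $\kappa\delta$ from a boundary region whose location depends on the profile itself, so an $L^\infty$ error of size $m$ in $F$ shifts the cut points $R_\delta, D_\delta$, and the post-$K^{(\delta)}$ error need not stay bounded by $m$. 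The paper resolves this with a dedicated construction, Proposition~\ref{prop5.3}, which replaces a ``modulo $m$'' comparison by an honest $\preccurlyeq$ comparison with an auxiliary pair $(f^\star,g^\star)\in\mathcal B(\phi,M_0)$, at the controlled cost of doubling the tolerance ($m\to 2m$); only then can the genuine monotonicity of $S^{(\delta,\pm)}_\delta$ be invoked. Your ``telescoping argument'' silently assumes this step, and without something playing the role of Proposition~\ref{prop5.3} the induction does not close.

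Second, you invoke a comparison (maximum) principle for the exact free-boundary flow $T_\delta$, and flag it as delicate. The paper never needs it. The iteration is arranged so that the one-step estimate \eqref{6.14} is always applied directly to the exact solution $T_{k\delta}(\upmu_0,\upnu_0)$, and only the barrier operator $S^{(\delta,\pm)}_\delta$, never $T_\delta$, is applied to two comparable inputs. Moreover, the route you sketch for establishing monotonicity of $T_\delta$ --- deducing it from the barrier approximants via Theorem~\ref{teor4.4} --- would be circular here, since Theorem~\ref{thm5.1} is precisely the statement that identifies $T_\delta$ with the limit of those barriers. Finally, the one-step estimates in the paper go through the probabilistic representation \eqref{5.2a}--\eqref{5.2b}, an isomorphism-of-measures argument (Rokhlin) and the inequality \eqref{5.28} imported from \cite{CDGP2}; ``Duhamel bookkeeping of the boundary flux'' is a reasonable heuristic, but you would have to supply an argument of comparable precision to make the two-sided directional inequalities come out with the correct sign.
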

We first prove Theorem \ref{teo2} as a corollary of  Theorem \ref{thm5.1}.

\noindent {\bf Proof of  Theorem \ref{teo2}}. Fix a $t\le S$ and consider $k=$ integer part of $\delta^{-1}t$,  then take the limit $\delta\to 0$ in \eqref{eq5.20}  using Theorem \ref{teor4.4} we then get that
$(\mu(\cdot,t),\nu(\cdot,t))$ coincide with $(\bar u(\cdot,t),\bar v(\cdot,t))$ of Theorem \ref{teo1}.\qed

We  prove
  in Subsection \ref{subsub521} the lower bound  and in Subsection \ref{subsub523} the upper bound  in \eqref{eq5.20}  for $k=1$, finally, in  Subsection \ref{sub5.2} we prove that we can reduce the generic step to this case. We first need to state properties of the regular solutions that will be used in the sequel.

\subsection{Properties of a regular solution.}
\label{sub5.1}

 The regular solution $(\upmu(\cdot,t),U_t)$, $(\upnu(\cdot,t),V_t)$, $t\in[ 0,S]$ is related to the law $P_{r',s}$ of a Brownian motion $\{B_t,t\ge s\}$ that starts from $r'\in \mathbb R$ at time $s\in [0,S]$ in the following way, see for instance \cite{KS}. First define the stopping times
	\begin{equation}
	\label{5.6a}
\tau_s^{U}=\inf\Big\{t\ge s : B_t\ge U_t\Big\},\qquad
\tau_s^{V}=\inf\Big\{t\ge s : B_t\ge V_t\Big\}.
	\end{equation}
Then
for  any $t\in [0,S]$ and any  interval $I\subset \mathbb R$
	\begin{equation}
\int_I \upmu(r,t)dr =\int \upmu_0(r') P_{r',0}(B_t\in I; \tau_0^{U}>t)
+\kappa\int_0^t P_{V_s,s}(B_t\in I; \tau_s^{U}>t),
 \label{5.2a} 	\end{equation}
	\begin{equation}
\int_I  \upnu(r,t)dr =\int \upnu_0(r') P_{r',0}(B_t\in I; \tau_0^{V}>t)
+\kappa\int_0^t P_{U_s,s}(B_t\in I; \tau_s^{V}>t).
 \label{5.2b}
	\end{equation}

We call $P_{r,s}(\tau^U_s\in dt)$ and $P_{r,s}(\tau^V_s\in dt)$ the  law of the stopping times  \eqref{5.6a}.

	\begin{lemma}
 \label{lemma5.1}
For all $t\in[0,S]$ we have
	\begin{eqnarray}
	\label{5.12}
&&	\hskip-1cm \int \upmu_0(r) P_{r,0}(\tau^U_0\le t)dr +\kappa \int_0^t  P_{V_s,s}(\tau^U_s\le t) ds=\kappa t,
\\&&		\label{5.12b}
	\hskip-1cm  \int \upnu_0(r) P_{r,0}(\tau^V_0\le t)dr +\kappa \int_0^t  P_{U_s,s}(\tau^V_s\le t) ds=\kappa t.
	 	\end{eqnarray}
Moreover, there are $C$ and $C'$ depending on the constant $c>U_t-V_t$ such that for  all $\delta$ small enough the following holds. For all $r^*\in \mathbb R$ and $t\le \delta$
\begin{eqnarray}
	\nn&&
\hskip-1cm
\Big| \kappa\int P_{V_s,s}(B_{t-s}\ge r^*; \tau_s^{U}>t)dr
\\&&-\int_0^t \int \upnu_0(r) P_{r,0}(\tau^V_0\in ds) P_{V_s,s}(B_{t}\ge r^*; \tau_s^{U}>t)dr  \Big|\le C' e^{- C\delta^{-1}},\nn \\
\label{5.14}
\end{eqnarray}	
\begin{eqnarray}
	\nn
&&\hskip-1cm
 \Big| \kappa\int P_{U_s,s}(B_{t-s}\le r^*; \tau_s^{V}>t)dr
\\&&-\int_0^t \int \upmu_0(r) P_{r,0}(\tau^V_0\in ds) P_{U_s,s}(B_{t}\le r^*; \tau_s^{V}>t)dr ds\Big|\le C' e^{- C\delta^{-1}}.\nn \\
\label{5.15}
\end{eqnarray}	

	\end{lemma}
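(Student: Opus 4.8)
The plan is to treat the mass identities \eqref{5.12}--\eqref{5.12b} and the estimates \eqref{5.14}--\eqref{5.15} separately. The identities express global mass conservation for the two species. Integrating the $\upmu$-equation of \eqref{eq11} over $r\in(-\infty,U_t)$ and using $\upmu(U_t,t)=0$ (so the moving-endpoint term drops out), that $\upmu$ and $\upmu_r$ vanish near $-\infty$, and $-\tfrac12\upmu_r(U^-_t,t)=\kappa$, one gets $\frac{d}{dt}\int\upmu(\cdot,t)\,dr=\kappa-\kappa=0$, hence $\int\upmu(\cdot,t)\,dr=\int\upmu_0\,dr$ for all $t\in[0,S]$. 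Taking $I=\mathbb R$ in \eqref{5.2a} and subtracting this identity gives $\int\upmu_0(r)P_{r,0}(\tau^U_0\le t)\,dr=\kappa\int_0^t P_{V_s,s}(\tau^U_s>t)\,ds$; adding $\kappa\int_0^t P_{V_s,s}(\tau^U_s\le t)\,ds$ to both sides and using $P_{V_s,s}(\tau^U_s>t)+P_{V_s,s}(\tau^U_s\le t)=1$ yields \eqref{5.12}, and \eqref{5.12b} is obtained in the same way with the two species interchanged.

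For \eqref{5.14} the plan is to use \eqref{5.12b} to split the constant rate $\kappa$ at which the $\upmu$-source injects mass at $V_s$ into the part carried by $\upnu$-mass originating from the initial datum, namely $\int\upnu_0(r)P_{r,0}(\tau^V_0\in ds)\,dr$, and the part $\kappa\int_0^s P_{U_{s'},s'}(\tau^V_{s'}\in ds)\,ds'$ carried by $\upnu$-mass created by the $\upnu$-source at $U_{s'}$; this decomposition of the measure $\kappa\,ds$ follows by differentiating \eqref{5.12b} in $t$, which is licit since regularity of $U_\cdot,V_\cdot$ makes the hitting laws absolutely continuous. Substituting it into $\kappa\int_0^t P_{V_s,s}(B_t\ge r^*;\tau^U_s>t)\,ds$ (the source contribution to $\int_{r^*}^{\infty}\upmu(\cdot,t)$ in \eqref{5.2a}), the first part reproduces exactly the second term of \eqref{5.14}, while the second part is nonnegative and, since $0\le P_{V_s,s}(B_t\ge r^*;\tau^U_s>t)\le1$, is bounded after a Fubini exchange by $\kappa\int_0^t P_{U_{s'},s'}(\tau^V_{s'}\le t)\,ds'$. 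Thus \eqref{5.14} reduces to $\kappa\int_0^t P_{U_{s'},s'}(\tau^V_{s'}\le t)\,ds'\le C'e^{-C\delta^{-1}}$ for $t\le\delta$, and \eqref{5.15} reduces in the same way, after exchanging the roles of $(\upmu,U,\tau^U)$ and $(\upnu,V,\tau^V)$, to $\kappa\int_0^t P_{V_{s'},s'}(\tau^U_{s'}\le t)\,ds'\le C'e^{-C\delta^{-1}}$.

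The only step requiring real work --- the main obstacle --- is this Brownian small-deviation estimate, and it is where the separation of the free boundaries is used. Since $U_\cdot,V_\cdot$ are continuous on $[0,S]$ and $U_t>V_t$ for all $t$ (indeed $\inf_{t\le S}(U_t-V_t)>0$ by regularity), one can fix $\delta_0$ so small that over any window of length $\delta<\delta_0$ a crossing from $U$ to $V$ requires a Brownian displacement of at least some fixed $a>0$; this determines $C,C'$ in terms of the geometric constant of the statement. Then for $0\le s'\le t\le\delta<\delta_0$, a Brownian motion started at $U_{s'}$ at time $s'$ can have $\tau^V_{s'}\le t$ only if $\max_{0\le v\le\delta}|B_{s'+v}-B_{s'}|\ge a$ (using $s'+\delta\ge t$), which by the reflection principle has probability at most $C_1e^{-a^2/(2\delta)}$, uniformly in $s'$. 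Hence $\kappa\int_0^t P_{U_{s'},s'}(\tau^V_{s'}\le t)\,ds'\le C_1\kappa\,\delta\,e^{-a^2/(2\delta)}\le C'e^{-C\delta^{-1}}$, completing \eqref{5.14}; \eqref{5.15} is identical with $U$ and $V$ swapped. The remaining ingredients --- differentiating \eqref{5.12b}, the Fubini exchange, and identifying the $t$-derivative of $\int\upnu_0(r)P_{r,0}(\tau^V_0\le s)\,dr$ with $\int\upnu_0(r)P_{r,0}(\tau^V_0\in ds)\,dr$ --- are routine bookkeeping.
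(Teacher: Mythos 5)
Your proof is correct and follows essentially the same route as the paper: mass conservation together with the probabilistic representations \eqref{5.2a}--\eqref{5.2b} taken with $I=\mathbb R$ give \eqref{5.12}--\eqref{5.12b}, differentiating these identities in $t$ produces the decomposition of the constant rate $\kappa$, and substituting that decomposition into the source term of \eqref{5.2a} leaves a residual that is controlled by a Gaussian small-deviation bound (the paper's \eqref{5.111}). You merely spell out a couple of steps the paper leaves implicit (the derivation of $\int\upmu(\cdot,t)=\int\upmu_0$ from \eqref{eq11}, and the reflection-principle justification of the exponential bound), but the structure and key ideas are identical.
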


\begin{proof}  From \eqref{5.2a} we have
   	\begin{eqnarray*}
&&\hskip-1cm
 \int \upmu(r,t)dr=\int \upmu_0(r) P_{r,0}(\tau^U_0>t)dr +\kappa\int_0^t  P_{V_s,s}(\tau^U_s>t) ds
\\&&=\int \upmu_0(r)dr+\kappa t-
	 \int \upmu_0(r) P_{r,0}(\tau^U_0\le t)dr -j \int_0^t  P_{V_s,s}(\tau^U_s\le t) ds.
	 	\end{eqnarray*}
	Since the total mass is conserved this yields \eqref{5.12}.
The proof of \eqref{5.12b} is analogous.
Differentiating equations \eqref{5.12} and \eqref{5.12b} and noticing that $P_{r,0}(\tau_0^U\in dt)$ is absolutely continuous with respect to the Lebesgue measure, we get 
\begin{equation}
\kappa=\int \upmu_0(r) P_{r,0}(\tau_0^U\in dt)dr
+\kappa\int_0^t P_{V_{s},s}(\tau_s^U\in dt)ds,
 \label{5.3a}
\end{equation}
	\begin{equation}
	 \label{5.3b}
\kappa=\int \upnu_0(r) P_{r, 0}(\tau_0^V\in dt)dr
+\kappa\int_0^t P_{U_s,s}(\tau_s^V\in dt)ds.
\end{equation}
	We now use \eqref{5.3b}  to rewrite $\kappa$ on the right hand side of \eqref{5.2a} 
	as follows
		\begin{eqnarray}
		\nn &&
\hskip-1.7cm  \kappa\int_0^t P_{V_s,s}(B_t\ge r^*; \tau_s^{U}>t)ds\\&&\nn
 \hskip.4cm =\int_0^t \int v_0(r) P_{r, 0}(\tau^V_0\in ds) P_{V_s,s}(B_t\ge r^*; \tau_s^{U}>t)dr
\nn\\&& \hskip.4cm +
\int_0^t  \int_0^s \kappa P_{U_{s'},s'}(\tau^V_{s'}\in ds) P_{V_s,s}(B_t\ge r^*; \tau_s^{U}>t)ds'.
 \label{5.11}
	\end{eqnarray}
There are $C,C'>0$ so that for all $0\le s'<s<\delta$
	\begin{eqnarray}
P_{U_{s'},s'}(\tau_{s'}^V<s)\le C'e^{-C\delta^{-1}},\qquad
P_{V_{s'},s'}(\tau_{s'}^U<s)\le C'e^{-C\delta^{-1}}.
	\label{5.111}
	\end{eqnarray}
	To prove \eqref{5.14} we observe that the last term in \eqref{5.11} is  bounded by \eqref{5.111}.
The proof of \eqref{5.15} is analogous by using \eqref{5.2b}  and \eqref{5.3a}.
 \end{proof}

\subsection{Lower bound in the first time interval.}
\label{subsub521}

Here we prove the first inequality in \eqref{eq5.20}  for $k=1$ observing that in the proof we only use that the evolution $S^{(\delta,-)}_\delta(\upmu_0,\upnu_0)$ has same initial datum as the regular solution. 
More precisely we prove that for all $r^*\in\mathbb R$
	\begin{equation}
	\label{5.33}
F(r^*;\upmu(\cdot,\delta))=\int_{r^*}^\infty \upmu(r,\delta)dr \ge \int_{r^*}^\infty u_\delta^{(\delta,-)}(r)dr -3C'e^{-C\delta^{-1}},
	\end{equation}
with $C'$ and $C$ as in Lemma \ref{lemma5.1}.

By definition $\dis{u^{(\delta,-)}_\delta=1_{(-\infty,R)}
G_\delta\star  \mu_0+ \mathbf 1_{(-\infty,D]}G_\delta\star  \nu_0}$ with $R$, $D$ so that
	\begin{equation}
	\label{5.30}
\int_{R}^\infty G_\delta\star \upmu_0=\kappa\delta,\qquad
\int^{ D}_{-\infty} G_\delta\star \upnu_0=\kappa\delta.
	\end{equation}
By using the law of the Brownian motion we write
		\begin{eqnarray}	
&&\hskip-1cm	\int_{r^*}^\infty u_\delta^{(\delta,-)}= \int \upmu_0(r) P_{r,0}\big(B_\delta\in (r^*,  R)\big)dr
+ \int \upnu_0(r) P_{r,0}\big(B_\delta\in [r^*,   D)\big)dr	
\nn \\&&=\int \upmu_0(r)P_{r,0}(B_\delta\ge r^*)-\kappa\delta
+ \int \upnu_0(r) P_{r,0}\big(B_\delta\in [r^*, D)\big)dr	.
\label{5.31}
\end{eqnarray}
Using \eqref{5.2a} and \eqref{5.12} we get
	\begin{equation}
 \int_{r^*}^\infty \upmu(r,\delta)dr
\ge
\int \upmu_0(r)P_{r,0}(B_\delta\ge r^*)+\kappa\int_0^\delta P_{V_s,s}(B_{\delta-s}\ge r^*)-\kappa\delta.
\label{5.17a}
\end{equation}	
Thus if $r^*> D$ from \eqref{5.31} and \eqref{5.17a} we get
\eqref{5.33}.
We then assume that $r^*\le D$  and
observe that
 by \eqref{5.14}  and \eqref{5.111}
		 \begin{eqnarray}
		 &&\nn
\hskip-2.5cm
\kappa\int_0^\delta P_{V_s,s}(B_{\delta-s}\ge r^*)\ge
\kappa\int_0^\delta P_{V_s,s}(B_{\delta-s}\ge r^*; \tau_s^{U}>\delta)\\&& \ge \int  \upnu_0(r) P_{r,0}(B_\delta\ge r^*;\tau_0^V\le \delta)dr
- 2C' e^{- C\delta^{-1}}.
 \label{5.35}
		\end{eqnarray}
By  \eqref{5.12b} and \eqref{5.111}, $\dis{\int \upnu_0(r) P_{r,0}(\tau_0^V\le \delta)dr\ge \kappa\delta- C'e^{-C\delta^{-1}}}$.
Thus
	\begin{eqnarray}
	\label{5.40}
&&\hskip-1cm\nn
\int \upnu_0(r) P_{r,0}(B_\delta\ge r^*;\tau_0^V\le \delta)\ge \kappa\delta- \int \upnu_0(r) P_{r,0}(B_\delta\le r^*;\tau_0^V\le \delta)
- C'e^{-C\delta^{-1}}
\\&&\hskip1cm \ge  \kappa\delta- \int \upnu_0(r) P_{r,0}(B_\delta\le r^*)dr
- C'e^{-C\delta^{-1}}.
	\end{eqnarray}
Then from \eqref{5.35}, \eqref{5.40} and the definition of $ D$ we get
  \begin{eqnarray}
  \nn
&&\hskip-1cm  \kappa\int_0^\delta P_{V_s,s}(B_{\delta-s}\ge r^*) \ge \kappa\delta- \int \upnu_0(r)P_{r,0}\big(B_\delta\le r^*\big)dr
- 3C'e^{-C\delta^{-1}},
\\&&\hskip1cm=
\int \upnu_0(r)P_{r,0}\big(B_\delta\in [r^*,  D]\big)dr
- 3C'e^{-C\delta^{-1}},
\label{5.36}
\end{eqnarray}
 concluding the proof of \eqref{5.33}.

\subsection{Upper bound in the first time interval.}
\label{subsub523}

Here we give the proof of the upper bound in \eqref{eq5.20}   for $k=1$.
Call $ R_0$ and $ D_0$ the points such that
	\begin{equation}
	\label{5.17}
\int_{ R_0}^\infty \upmu_0(r)dr=\kappa\delta,\qquad
\int^{D_0}_{-\infty} \upnu_0(r)dr=\kappa\delta.
	\end{equation}
and call $u_2=\upmu_0-u_1$, $v_2=\upnu_0-v_1$ where
	\begin{eqnarray}
u_1(r)= \upmu_0(r)\mathbf 1_{( R_0,+\infty)}(r)
\qquad v_1(r)= \upnu_0(r))\mathbf 1_{(-\infty,D_0]}(r).
	\label{a5.26}
	\end{eqnarray}
	Thus $v_1$ and $u_1$ have mass $\kappa\delta$ and  by definition
	\begin{equation*}	
	u^{(\delta,+)}_\delta=G_\delta\star [u_2+v_1],\qquad 	v^{(\delta,+)}_\delta=G_\delta\star [v_2+u_1].
		\end{equation*}
From  \eqref{5.2a} we get that the inequality $F(r^*; \mu(\cdot,\delta))\le F(r^*; u^{(\delta,+)}_\delta)+m$ can be written as
	\begin{eqnarray}
	\nn
&&\hskip-1cm
\int \upmu_0(r)P_{r,0}(B_\delta\ge r^* ; \tau_0^{U}>\delta)dr
+\kappa\int_0^\delta P_{V_s,s}(B_{\delta-s}\ge r^*; \tau_s^{U}>\delta)ds
\\&&\hskip2cm \le
\int [u_2(r)+v_1(r)] P_{r,0}(B_\delta \ge r^*)dr + m.
\label{5.23k1}
\end{eqnarray}	
We prove below \eqref{5.23k1} for $m=4C'e^{-C\delta^{-1}}$ with $C'$ and $C$ as in Lemma \ref{lemma5.1}.
Since $\upmu_0=u_1+u_2$ we have
\begin{eqnarray}
	\nn
&&\hskip-.7cm
\int \upmu_0(r)P_{r,0}(B_\delta\ge r^* ; \tau_0^{U}>\delta)dr
=\int u_2(r) P_{r,0}(B_\delta\ge r^*)dr
\\&&\hskip.6cm
+\int u_1(r)P_{r,0}(B_\delta\ge r^* ; \tau_0^{U}>\delta)dr
- \int u_2(r) P_{r,0}(B_\delta\ge r^* ; \tau_0^{U}\le \delta)dr.
\nn\\\label{a5.23}
\end{eqnarray}	
From \eqref{5.14},  \eqref{5.111} and using that $\nu_0=v_1+v_2$ we have	\begin{eqnarray}
&&\hskip-.6cm \nn
\kappa\int_0^\delta P_{V_s,s}(B_{\delta-s}\ge r^*; \tau_s^{U}>\delta)\le \int \nu_0(r) P_{r,0}(B_\delta\ge r^*;\tau^D\le \delta)dr
\\&&\hskip5cm\nn +2C'e^{-C\delta^{-1}}
\\&&\hskip.5cm\nn
= \int v_2(r) P_{r,0}(B_\delta\ge r^*;\tau^V\le \delta)dr
- \int v_1(r) P_{r,0}(B_\delta\ge r^*;\tau^V>\delta)dr
\\&&\hskip1.5cm+\int v_1(r) P_{r,0}(B_\delta\ge r^*)dr+2C'e^{-C\delta^{-1}}.
	\label{5.21}
	\end{eqnarray}	
From 
 \eqref{a5.23} and \eqref{5.21} we get
	\begin{eqnarray}
	\nn
&&\hskip-.7cm	F(r^*; \mu(\cdot,\delta))\le \int [u_2(r)+v_1(r)] P_{r,0}(B_\delta \ge r^*)dr + 2C'e^{-C\delta^{-1}}
	\\&&\hskip.3cm \nn
+\int u_1(r)P_{r,0}(B_\delta\ge r^* ; \tau_0^{U}>\delta)dr
- \int u_2(r) P_{r,0}(B_\delta\ge r^* ; \tau_0^{U}\le \delta)dr
\\&&\hskip.3cm\nn\nn
+ \int v_2(r) P_{r,0}(B_\delta\ge r^*;\tau^V\le \delta)dr
- \int v_1(r) P_{r,0}(B_\delta\ge r^*;\tau^V>\delta)dr. \nn
\\\label{a5.30}
		\end{eqnarray}
In Lemma \ref{le54} below we prove that the last two terms on the right hand side of \eqref{a5.30} are bounded by $C'e^{-C\delta^{-1}}
$ thus concluding the proof of  \eqref{5.23k1}.

	\begin{lemma}
	\label{le54}
Let $u_i$ and $v_i$, $i=1,2$ be as in \eqref{a5.26}, then for all $r^\star\in\mathbb R$
	\begin{equation}
\int u_1(r)P_{r,0}(B_\delta\ge r^* ; \tau_0^{U}>\delta)\le
\int u_2(r) P_{r,0}(B_\delta \ge r^* ; \tau_0^{U}\le \delta)+C'e^{-C\delta^{-1}},
	\label{5.22}\end{equation}
		\begin{equation}
\int v_1(r)P_{r,0}(B_\delta\ge r^* ; \tau_0^V>\delta)\ge
\int v_2(r) P_{r,0}(B_\delta \ge r^* ; \tau_0^V\le \delta)-C'e^{-C\delta^{-1}}.
	\label{5.23}
	\end{equation}
	\end{lemma}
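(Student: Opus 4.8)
The plan is to prove \eqref{5.22}; inequality \eqref{5.23} then follows by the space reflection $r\mapsto -r$, which interchanges the $a$ and $b$ species and turns \eqref{5.22} into \eqref{5.23}. The first step is purely algebraic: adding $\int u_1(r)P_{r,0}(B_\delta\ge r^*;\tau_0^U\le\delta)\,dr$ to both sides of \eqref{5.22} and using $u_1+u_2=\upmu_0$ shows that \eqref{5.22} is equivalent to
\[
\int u_1(r)\,P_{r,0}(B_\delta\ge r^*)\,dr\ \le\ \int \upmu_0(r)\,P_{r,0}(B_\delta\ge r^*;\tau_0^U\le\delta)\,dr\ +\ C'e^{-C\delta^{-1}} .
\]
In words: the mass of $\upmu_0$ absorbed at the right free boundary before time $\delta$ — which by \eqref{5.12} and \eqref{5.111} equals $\kappa\delta=\int u_1$ up to an error exponentially small in $\delta^{-1}$ — sits at least as far to the right, in the sense of $F$, as the rightmost slab $u_1$ of $\upmu_0$ diffused freely for time $\delta$.

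I would treat the right-hand side by the strong Markov property of Brownian motion at $\tau_0^U$: since $B_{\tau_0^U}=U_{\tau_0^U}$ and $P_{r,0}(\tau_0^U\in ds)$ has no atoms, conditioning on $\{\tau_0^U=s\}$ makes $B_\delta$ an $N(U_s,\delta-s)$ variable, whence
\[
\int \upmu_0(r)P_{r,0}(B_\delta\ge r^*;\tau_0^U\le\delta)\,dr=\int_0^\delta P_{U_s,s}(B_\delta\ge r^*)\,\Big(\int\upmu_0(r)\,P_{r,0}(\tau_0^U\in ds)\,dr\Big).
\]
By \eqref{5.3a} and \eqref{5.111} the inner measure is $\kappa\,ds$ up to a density bounded by $C'e^{-C\delta^{-1}}$, and by regularity $U_s=R+O(\delta)$ with $U_0=R$; since $P_{U_s,s}(B_\delta\ge r^*)$ is nondecreasing in $U_s$, bounding $U_s\ge R-c_0\delta$ (with $c_0$ controlled by $\|\dot U\|_{C[0,S]}$) only lowers the right-hand side. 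So it suffices to prove
\[
\int u_1(r)\,P_{r,0}(B_\delta\ge r^*)\,dr\ \le\ \kappa\int_0^\delta P_{R-c_0\delta,\,s}(B_\delta\ge r^*)\,ds\ +\ C'e^{-C\delta^{-1}} .
\]

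Unless $r^*$ lies in a bounded neighbourhood of $R$ both sides are exponentially (in $\delta^{-1}$) close to their extreme values: for $r^*\ge R+\eta$ the left-hand side is $O(e^{-\eta^2/2\delta})$ since $u_1$ is supported in $(R_0,R)$, while for $r^*\le R-\eta$ both sides differ from $\kappa\delta$ by $O(e^{-\eta^2/8\delta})$ — here one uses that $\upmu_0(r)=2\kappa(R-r)+o(R-r)$ near $U_0=R$, so $R_0=R-\sqrt\delta\,(1+o(1))$ and $u_1$ lives within $O(\sqrt\delta)$ of $R$, together with \eqref{5.12}. The whole difficulty is thus confined to the window $|r^*-R|\le\eta$. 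There I would insert the boundary expansion of the regular solution and rescale $r=R-\rho\sqrt\delta$, $r^*=R-a\sqrt\delta$; the last displayed inequality then reduces, modulo the error terms, to the deterministic statement
\[
2\int_0^1\rho\,P\!\big(N(0,1)\le a-\rho\big)\,d\rho\ \le\ \int_0^1 P\!\big(N(0,1)\le a/\sqrt t\,\big)\,dt,\qquad a\in\mathbb R,
\]
which can be checked by elementary calculus (both sides tend to the same limit as $a\to\pm\infty$, the regimes already covered above).

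The step I expect to be the main obstacle is the error bookkeeping in this last reduction: the passages from the moving boundary $U_s$ to the fixed level $R$, from the absorption measure $\int\upmu_0 P_{r,0}(\tau_0^U\in ds)\,dr$ to $\kappa\,ds$, and from $\upmu_0$ to its linearisation at $R$, must each be shown to cost only $C'e^{-C\delta^{-1}}$ and not merely a power of $\delta$; this is delicate because near $U_0=R$ these errors are a priori of order $\delta^{3/2}$ and only the slowness of the profile's decay at the boundary, combined with the smallness of the hitting probabilities \eqref{5.111} away from the boundary, can beat them down. It is precisely here that the $C^1$ regularity of $U$, the integral identities \eqref{5.12}, \eqref{5.3a}, and the exponential hitting-time bound \eqref{5.111} of Lemma \ref{lemma5.1} are used at full strength, and it is the technical core of the argument.
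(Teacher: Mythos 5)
Your algebraic reduction is correct: adding $\int u_1(r)P_{r,0}(B_\delta\ge r^*;\tau_0^U\le\delta)\,dr$ to both sides and using $u_1+u_2=\upmu_0$ does recast \eqref{5.22} as a comparison between the freely diffused rightmost slab $u_1$ and the mass of $\upmu_0$ absorbed at $U$. But after that point your approach diverges from the paper's, and I think there is a genuine gap at exactly the step you flag. Your plan requires replacing $\upmu_0$ by its linearization $2\kappa(R-r)$ on the slab $(R_0,R)$ of width $\sim\sqrt\delta$, replacing the moving boundary $U_s$ by $R-c_0\delta$, and then checking a scaled inequality of the form
\begin{equation*}
2\int_0^1\rho\,\Phi(a-\rho)\,d\rho\ \le\ \int_0^1 \Phi\!\big(a/\sqrt t\,\big)\,dt .
\end{equation*}
The trouble is that the linearization error is genuinely of order $\delta^{3/2}$: $\upmu_0$ is only $C^1$ at $R$, the quadratic correction on a window of length $\sqrt\delta$ contributes a positive amount of that size, and there is nothing that ``beats it down'' to $C'e^{-C\delta^{-1}}$ — the hitting bound \eqref{5.111} controls excursions between $U$ and $V$, not the profile's Taylor remainder. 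So as stated your chain of inequalities cannot close with an exponentially small error. To rescue it you would have to show that the scaled inequality has a margin of order one (i.e.\ order $\delta$ before rescaling) dominating the $O(\delta^{3/2})$ errors whenever $r^*$ is within $O(\sqrt\delta)$ of $R$, and separately that for $r^*$ well below $R$ (resp.\ above) both sides of \eqref{5.22} are exponentially close to $\kappa\delta$ (resp.\ $0$) by the mass balance \eqref{5.12} and \eqref{5.111}; this three-regime bookkeeping is nontrivial and is not in your sketch.

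The paper's proof sidesteps all of this. It writes the mass balance \eqref{5.12} as $\int u_1(r)[1-\alpha(r)]dr=\int u_2(r)\alpha(r)dr+\kappa\int_0^\delta\beta(s)ds$ with $\alpha(r)=P_{r,0}(\tau^U_0\le\delta)$, $\beta(s)=P_{V_s,s}(\tau_s^U\le\delta)$; disintegrates both hitting probabilities through the conditional laws of the hitting times (so the right-hand side of \eqref{5.22}, up to the extra $\kappa\int_0^\delta\beta$ term controlled by \eqref{5.111}, becomes $\int_0^\delta g(dt)\,P_{U_t,t}(B_\delta\ge r^*)$ for a measure $g$ of the same total mass $Z$); invokes Rokhlin's isomorphism of Lebesgue measures to produce a transport map $\Gamma$ from $u_1(r)[1-\alpha(r)]dr$ to $g(dt)$; and then applies the key comparison inequality \eqref{5.28} from \cite{CDGP2}, $P_{r,0}(B_\delta\ge r^*\mid\tau_0^U>\delta)\le P_{U_t,t}(B_\delta\ge r^*)$ for $r\le U_0$, to dominate the left-hand side by the right-hand side, term by term along the transport. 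No Taylor expansion, no small-window analysis, no case split in $r^*$; the exponentially small error comes solely from \eqref{5.111}. The idea that is missing from your proposal is precisely this soft mass-transport plus stochastic-domination argument, and it is what makes the exponential rate achievable.
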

	
\begin{proof}
	We only prove \eqref{5.22}  since the proof \eqref{5.23} is completely analogous. 	
	From   \eqref{5.12} we get
	\begin{eqnarray}
	\nn
 \int [u_1(r)+u_2(r)] P_{r,0}(\tau^U_0\le \delta)dr +\kappa \int_0^\delta  P_{V_s,s}(\tau^U_s\le \delta) ds=\kappa\delta=\int u_1(r)dr,
	\end{eqnarray}
thus
		\begin{equation}
 \int u_1(r) [1-P_{r,0}(\tau^U_0\le \delta)]dr = \int u_2(r) P_{r,0}(\tau^U_0\le \delta)dr
 +\kappa \int_0^\delta  P_{V_s,s}(\tau^U_s\le \delta).
	 \label{5.25}
		\end{equation}
We call
	\begin{equation}
\alpha(r)=P_{r,0}(\tau^U_0\le \delta),\qquad
 \beta(s)=  P_{V_s,s}(\tau^U_s\le \delta)
	 \label{5.45}
		\end{equation}
and from \eqref{5.25} we get
	\begin{equation}
Z:= \int u_2(r) \alpha(r)
 +\kappa \int_0^\delta  \beta(s)=  \int u_1(r) [1-\alpha(r)]dr.
	 \label{5.46}
		\end{equation}
We call $\la_r(ds)$ the law of $\tau^U_0$ conditioned to the event $\tau^U_0\le \delta$ when the Brownian motion starts from $r$ at time 0  and write
		\begin{equation}
	\int u_2(r) P_{r,0}(B_\delta \ge r^* ; \tau_0^{U}\le \delta)dr=\int u_2(r) \alpha(r)\int_0^\delta \la_r(ds)P_{U_s,s}(B_\delta \ge r^*).
	\label{5.47}
		\end{equation}
We denote  by $\nu_s(ds')$ the law of $\tau^U_s$ conditioned to the event $\tau^U_s\le \delta$ when the Brownian motion starts from $V_s$ at time $s$  and write
	\begin{equation}
	\kappa \int_0^\delta  P_{V_s,s}(B_{\delta-s}\ge r^*;\tau^U_s\le \delta)
	=	\kappa \int_0^\delta  \beta(s)\int_s^\delta
	\nu_s(ds') P_{V_s',s'}(B_{\delta-s'}\ge r^*).
	\label{5.48}
		\end{equation}
From \eqref{5.46}, \eqref{5.47} and \eqref{5.48} it follows that there  exists a non negative measure $g(dt)$ on $[0,\delta]$,
so that $\dis{\int_0^\delta g(dt) =Z}$ and
		\begin{eqnarray}
		\nn
&& \hskip-1cm
\int u_2(r) P_{r,0}(B_\delta \ge r^* ; \tau_0^{U}\le \delta)dr+
	\kappa \int_0^\delta  P_{V_s,s}(B_{\delta-s}\ge r^*;\tau^U_s\le \delta)
\\&&\hskip1cm
	= \int_0^\delta  \, g(dt) P_{ U_t;t}\Big(B_{\delta-t} \ge r^*\Big).
	\label{5.49}
		\end{eqnarray}
Thus since by  \eqref{5.46} the measures $u_1(r)[1-\alpha(r)]dr$ and $g(dt)$ have same mass $Z$
 then,
by the isomorphism of Lebesgue measures, \cite{roklin},  there is a map $\Ga:\mathbb R \to [0,\delta]$ so that
\begin{equation}
\int_0^\delta g(dt) P_{ U_t;t}\Big[B_\delta \ge r^*\Big]  = \int u_1(r)[1-\alpha(r)]
P_{ U_{\Ga(r)};\Ga(r)}\big(B_\delta \ge r^*\big)dr.
\label{7.24a}
	\end{equation}
	We use the following inequality  proved in  \cite{CDGP2})(see the proof of (5.36) in this paper). If $\ga=(\ga(t)$, $t\ge 0)$ is a $C^1$-curve then for all $\delta>0$
\begin{eqnarray}
 P_{r;0}\Big[B_\delta \ge r\,\big|\, {\tau_0^\ga} >\delta\Big] \le
P_{\ga_{t};t}\Big[B_\delta \ge r\Big],\qquad \forall r\le \ga(0), t\in[0,\delta]
\label{5.28}
	\end{eqnarray}
where $\tau_0^\ga$ is the hitting time of the curve $\ga$.

By  \eqref{5.28} and \eqref{5.111}, from \eqref{5.49} and \eqref{7.24a} we get
	\begin{eqnarray}
	\nn
&&
\hskip-.8cm\int u_1(r)P_{r,0}(B_\delta\ge r^* ; \tau_0^U>\delta)
=\int u_1(r)[1-\alpha(r)]
P_{ r,0}(B_\delta \ge r^*|\tau_0^U>\delta)
\\&&\nn \le \int u_1(r)[1-\alpha(r)]
P_{ U_{\Ga(r)};\Ga(r)}\big(B_\delta \ge r^*\big)
\\&&=\int u_2(r) P_{r,0}(B_\delta \ge r^* ; \tau_0^U\le \delta)dr+
	\kappa \int_0^\delta  P_{V_s,s}(B_{\delta-s}\ge r^*;\tau^U_s\le \delta)
	\nn
	\\&&\le \int u_2(r) P_{r,0}(B_\delta \ge r^* ; \tau_0^U\le \delta)dr+
C'e^{C\delta^{-1}}.	\nn
\end{eqnarray}
This concludes the proof of \eqref{5.22}.
\end{proof}

\subsection{Properties of the barriers.}
\label{sub5.2}

The function $w(\cdot,t)=\upmu(\cdot,t)+\upnu(\cdot,t)$ is the solution of the heat equation:
	\begin{equation}
	\label{6.2}
w(r,t)=(G_t\star w_0)(r),\quad  r\in \mathbb R, \, t\ge 0,\qquad w_0=\upmu(\cdot,0)+\upnu(\cdot,0).
	\end{equation}
Observe that not only the total mass $\int w_0=M_{\text{tot}}$ is conserved but also $\int\upmu(r,t)$ $= \int \upmu(r,0)=:M_0$ for all $t$. Given
$\phi\in L_1(\mathbb R,\mathbb R_+)$ we call
	\begin{equation}
\mathcal B(\phi, M_0):=\Big\{(u,v)\in \mathcal U:
u(r)+v(r)=\phi(r),\,\,\forall r\in \mathbb R, \text{ and }
 \int_{\mathbb R} u=M_0\Big\}.
	\label{6.5}
	\end{equation}
Below we will use the above definition with $\phi=w(\cdot,n\delta)$, because from the definitions  it follows that
	\begin{equation}
	\label{6.3}
u^{(\delta,\pm)}_{n\delta}(r)+v^{(\delta,\pm)}_{n\delta}(r)=w(r,n\delta),\qquad \forall r\in \mathbb R,
 \quad \forall n\le \delta^{-1}T
	\end{equation}
and also that for all $n\le \delta^{-1}T$
\begin{equation}
	\label{6.4}
\int_{\mathbb R} u^{(\delta,\pm)}_{n\delta}=\int_{\mathbb R} \upmu(r,0)=M_0
\qquad
\int_{\mathbb R} v^{(\delta,\pm)}_{n\delta}=\int_{\mathbb R} \upnu(r,0)=M_{\text{tot}}-M_0.
	\end{equation}
		\begin{definition}
	\label{def5.1b}
Given  two pairs
$(u',v')$, $(u,v)\in \mathcal B(\phi, M_0)$
and a number $m\ge 0$, we define
	\begin{equation}
(u',v') \prec (u,v) \text{ modulo } m\quad \text{iff }\, \forall r\in \mathbb R: \quad F(r;u')\le  F(r;u)+m.
\label{6.6}
	\end{equation}
If $m=0$  we  say that $(u',v') \preccurlyeq (u,v)$.
	\end{definition}
At the end of this Subsection we will prove that  \eqref{eq5.20}  for all $k\ge 1$ follows from the one step estimates of Subsections \ref{subsub521} and \ref{subsub523}.
We first  prove that the evolutions $S^{(\delta,\pm)}_{\delta}$ preserve the order in the case $m=0$.
	\begin{lemma}
	\label{lem5.1}
Let $(u',v')$, $(u,v)\in \mathcal B(\phi, M_0)$.
 \begin{equation}
\text{if }\quad (u',v') \preccurlyeq (u,v)\quad  \text{ then } \quad S^{(\delta,\pm)}_{\delta}(u',v')
 \preccurlyeq S^{(\delta,\pm)}_{\delta}(u,v).
 	\label{6.7}
	\end{equation}
Moreover $S^{(\delta,\pm)}_{\delta}(u',v')$ and $S^{(\delta,\pm)}_{\delta}(u,v)$ belong to $\mathcal B(G_\delta\star \phi, M_0)$.
		\end{lemma}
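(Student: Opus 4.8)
The plan is to factor $S^{(\delta,-)}_\delta=K^{(\delta)}\circ(G_\delta\star\,\cdot\,)$ and $S^{(\delta,+)}_\delta=(G_\delta\star\,\cdot\,)\circ K^{(\delta)}$, and to show separately that each of the two building blocks — the heat semigroup $u\mapsto G_\delta\star u$ and the color-redistribution map $K^{(\delta)}$ — maps the relevant $\mathcal B$ into the relevant $\mathcal B$ ($\phi$ being updated to $G_\delta\star\phi$ by the heat step and left unchanged by $K^{(\delta)}$) and is monotone for $\preccurlyeq$. The lemma then follows by composing the two factors in each of the two orders.

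The heat step is the soft one. From $F(r;G_\delta\star u)=\int u(r')\,P_{r',0}(B_\delta\ge r)\,dr'$ and an integration by parts in $r'$ (using $F(+\infty;u)=0$ and $P_{-\infty,0}(B_\delta\ge r)=0$) one gets $F(r;G_\delta\star u)=\int F(r';u)\,h_r(dr')$, where $h_r$ is the nonnegative measure obtained by differentiating the nondecreasing function $r'\mapsto P_{r',0}(B_\delta\ge r)$; hence $F(\cdot;u')\le F(\cdot;u)$ pointwise forces $F(\cdot;G_\delta\star u')\le F(\cdot;G_\delta\star u)$ pointwise. Linearity of the convolution gives $G_\delta\star u+G_\delta\star v=G_\delta\star\phi$ and $\int G_\delta\star u=\int u=M_0$, and the ``sorted'' structure of a pair in $\mathcal U$ is preserved, so $u\mapsto G_\delta\star u$ sends $\mathcal B(\phi,M_0)$ into $\mathcal B(G_\delta\star\phi,M_0)$.

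The work is concentrated in $K^{(\delta)}$, and its monotonicity is the step I expect to be the main obstacle. The bookkeeping is easy: $K^{(\delta)}(u,v)_1+K^{(\delta)}(u,v)_2=u+v$, and by the definitions of $R_\delta(u),D_\delta(v)$ one has $\int K^{(\delta)}(u,v)_1=\int_{-\infty}^{R_\delta(u)}u+\int_{-\infty}^{D_\delta(v)}v=(M_0-\kappa\delta)+\kappa\delta=M_0$, while the support/ordering structure is immediate once the arguments are smooth and strictly positive — which is exactly the case after a convolution, the situation relevant to $S^{(\delta,-)}$; for $S^{(\delta,+)}$ one convolves after $K^{(\delta)}$, which again restores that structure. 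For the order I would use $F(R_\delta(u);u)=\kappa\delta$, $\int_{-\infty}^{D_\delta(v)}v=\kappa\delta$ and the monotonicity of $F(\cdot;u),F(\cdot;v)$ to obtain the explicit formula
\[
F\big(r;K^{(\delta)}(u,v)_1\big)=\big(F(r;u)-\kappa\delta\big)^+ + \big(F(r;v)-(\int v-\kappa\delta)\big)^+,
\]
and the analogous one for $(u',v')$. Here a genuine difficulty surfaces: since $(u',v'),(u,v)\in\mathcal B(\phi,M_0)$ one has $F(\cdot;u')\le F(\cdot;u)$ and $\int v'=\int v$, hence $F(\cdot;v')\ge F(\cdot;v)$, so the first summand above moves in the favorable direction while the second moves the opposite way, and $F\big(r;K^{(\delta)}(u',v')_1\big)\le F\big(r;K^{(\delta)}(u,v)_1\big)$ is not pointwise-automatic. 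This is exactly where membership in $\mathcal U$ must be used — that both pairs are sorted, with $u,v$ supported on a left, respectively right, (semi-infinite) interval and $u\equiv\phi$ to the left of $\mathrm{supp}\,v$ — together with the a priori relations $R_\delta(u')\le R_\delta(u)$ and $D_\delta(v')\ge D_\delta(v)$ that follow from $(u',v')\preccurlyeq(u,v)$; I would use these to force, for each $r$, both relevant values of $F(r;u)$ onto the non-decreasing branch of the convex piecewise-linear map $x\mapsto (x-\kappa\delta)^+ + (F(r;\phi)-x-(\int v-\kappa\delta))^+$, after which $F(\cdot;u')\le F(\cdot;u)$ closes the argument. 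A possibly cleaner route is to pass to right-continuous quantile (inverse-$F$) functions, on which $\preccurlyeq$ becomes pointwise domination and $K^{(\delta)}$ acts by the transparent rule ``delete the top $\kappa\delta$ of $u$, reinsert the bottom $\kappa\delta$ of $v$''.

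Granting the two monotonicity statements, the lemma is immediate. For $S^{(\delta,-)}_\delta$ first apply that $u\mapsto G_\delta\star u$ preserves $\preccurlyeq$ and lands in $\mathcal B(G_\delta\star\phi,M_0)$, then that $K^{(\delta)}$ preserves $\preccurlyeq$ on $\mathcal B(G_\delta\star\phi,M_0)$; for $S^{(\delta,+)}_\delta$ apply the two in the opposite order. The membership claim $S^{(\delta,\pm)}_\delta(u',v'),S^{(\delta,\pm)}_\delta(u,v)\in\mathcal B(G_\delta\star\phi,M_0)$ is nothing but the composition of the two ``maps-$\mathcal B$-into-$\mathcal B$'' statements.
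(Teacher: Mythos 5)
Your approach --- factor $S^{(\delta,\mp)}_\delta$ into the heat step $G_\delta\star\,\cdot$ and the cutoff map $K^{(\delta)}$, prove each factor is $\preccurlyeq$-monotone and preserves the $\mathcal B$-structure (with $\phi\mapsto G_\delta\star\phi$), and compose --- is exactly the paper's proof; the piecewise-linear formula you write for $F\bigl(r;K^{(\delta)}(u,v)_1\bigr)$ is precisely what the paper computes through its case analysis $r\le\mathcal D$, $\mathcal D<r\le\mathcal D'$, $r>\mathcal D'$, and your remark about forcing both tail values onto the nondecreasing side of the convex map is the same content as the paper's use of the ordering $\mathcal D\le\mathcal D'\le\mathcal R'\le\mathcal R$ of the four cutoff points. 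The only real difference is cosmetic: for the heat step the paper cites Lemma 2.6 of \cite{CDGP1}, whereas you give a self-contained integration-by-parts argument.
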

\begin{proof}
We first prove that $
K_\delta$ is non decreasing with respect to  $\preccurlyeq$. Calling $(\bar u',\bar v')=K_\delta(u',v')$ and $(\bar u,\bar v)=K_\delta(u,v)$ we have
\begin{eqnarray}
\nn
&&	\bar u'=u'\mathbf 1_{(-\infty,\mathcal R')}+v'\mathbf 1_{(-\infty,\mathcal D')},\qquad 			\bar v'=u'\mathbf 1_{[\mathcal R',+\infty)}+v'\mathbf 1_{(\mathcal D',+\infty)}	
\\&&
\bar u=u\mathbf 1_{(-\infty,\mathcal R)}+v\mathbf 1_{(-\infty,\mathcal D)},\qquad \bar v=u\mathbf 1_{[\mathcal R,+\infty)}+v\mathbf 1_{(\mathcal D,+\infty)},
\label{5.53}
	\end{eqnarray}
where $\mathcal D$,  $\mathcal D'$,  $\mathcal R$ and  $\mathcal R'$ are the points such that
	\begin{eqnarray}
\int_{\mathcal R'}^\infty u'=\kappa\delta= \int^{\mathcal D'}_{-\infty} v',\qquad
\int_{\mathcal R}^\infty u=\kappa\delta= \int^{\mathcal D}_{-\infty} v.
	\label{5.52}
		\end{eqnarray}
Since $(u',v') \preccurlyeq (u,v)$ we have that $\mathcal D\le \mathcal D'\le 	\mathcal R'\le \mathcal R$. Furthermore $K_\delta(u',v')$ and $K_\delta(u,v)$ are both in the set $B(\phi, M)$.
Using this fact we get
	\begin{equation}
	\label{8.6}
\int_{\mathcal D}^{\mathcal D'} [u'+v']+\int^{\mathcal R'}_{\mathcal D'} u'=\int_{\mathcal D}^{\infty} [u'+v']-\int_{\mathbb R} v'=
\int_{\mathcal D}^{\infty} [u+v]-\int_{\mathbb R} v=\int_{\mathcal D}^{R} u.	
	\end{equation}
For $r\le \mathcal D$, from \eqref{8.6} we get
 	\begin{eqnarray}
\nn
	&&\hskip-1.6cm
F(r;\bar u')=\int^{\mathcal D'}_{r} [u'+v']dr
+\int^{\mathcal R'}_{\mathcal D'} u'
=\int^{\mathcal D}_{r} \phi +\int_{\mathcal D}^{R} u
=F(r;\bar u).
\end{eqnarray}
Analogous computations show that $F(r;\bar u')\le F(r;\bar u)$ for $r\le \mathcal D'$.
For $r> \mathcal D'$
		\begin{eqnarray}
&&\hskip-1.6cm
	F(r;\bar u')=
	\int^{+\infty}_{r} u' -\kappa\delta
	\le \int^{\mathcal R}_{r^*} u=F(r;\bar u).
\nn 	
	\end{eqnarray}
 Thus $F(r;\bar u')\le F(r;\bar u)$ for all $r\in\mathbb R$ and this concludes the proof of the monotonicity of $K_\delta$.
Recalling the definitions, to conclude the proof of the Lemma it is enough to show that also the convolution with $G_\delta$ is non decreasing with respect to $\preccurlyeq$. This fact is a simple adaptation of the proof of Lemma 2.6 of \cite{CDGP1} and thus we omit its proof. \end{proof}

The following Proposition, proved  in Appendix \ref{app}, will allow us to reduce the inequalities modulo $m>0$ to the ones with $m=0$.
\begin{proposition}
\label{prop5.3}
There is $m_0>0$ so that for all $m\in(0,m_0)$ the following holds. Let $(u',v')$, $(u,v)\in \mathcal B(\phi, M_0)$ be such that
$(u',v') \prec (u,v)$ modulo $m<M_0$, $m>0$.
\begin{enumerate}
\item \label{1}
There is $(f^\star,g^\star)\in \mathcal B(\phi, M)$ such that $ (u,v)\preccurlyeq (f^\star,g^\star)$,  $(u',v') \preccurlyeq (f^\star,g^\star) $ and 	\begin{equation}
	\label{5.56}
S^{(\delta,+)}_\delta(f^\star,g^\star)\prec S^{(\delta,+)}_\delta(u,v)\text{ modulo }2m.
		\end{equation}
		\item \label{2}
There is $(f_\star,g_\star)\in \mathcal B(\phi, M)$ so that $(f_\star,g_\star)\preccurlyeq  (u',v')$, $ (f_\star,g_\star) \preccurlyeq (u,v) $ and
	\begin{equation}
	\label{6.10}
S^{(\delta,-)}_\delta(u',v')\prec S^{(\delta,-)}_\delta(f_\star,g_\star)\text{ modulo }2m.
		\end{equation}
\end{enumerate}

\end{proposition}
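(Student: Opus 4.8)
The idea is to realize $(f^\star,g^\star)$ and $(f_\star,g_\star)$ as explicit envelopes of the two given pairs, and then to use that the two operations composing $S^{(\delta,\pm)}_\delta$ --- convolution by $G_\delta$ and the cut-and-paste map $K^{(\delta)}$ --- besides being monotone for $\preccurlyeq$ (part of Lemma~\ref{lem5.1}), do not \emph{increase} the modulus of the relation $\prec$. Granting the last point, \eqref{5.56} and \eqref{6.10} follow by combining it (started from modulus $2m$) with Lemma~\ref{lem5.1}, which supplies the reverse inequalities for free.

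\emph{The envelopes.} For \eqref{1} one first takes the ``bare'' envelope $\widehat f$ defined by $F(r;\widehat f)=\max\big(F(r;u),F(r;u')\big)$, and $\widehat g=\phi-\widehat f$, so that $F(r;\widehat g)=\min\big(F(r;v),F(r;v')\big)$ with $v=\phi-u$, $v'=\phi-u'$. Since this envelope has slope $-u$ or $-u'$ away from its finitely many corners and $\max(F(\cdot;u),F(\cdot;u'))\le F(\cdot;\phi)$, one gets $0\le\widehat f\le\phi$, $\widehat g\ge 0$ and $\int\widehat f=M_0$, while $(u,v)\preccurlyeq(\widehat f,\widehat g)$, $(u',v')\preccurlyeq(\widehat f,\widehat g)$ and $F(\cdot;\widehat f)\le F(\cdot;u)+m$ are read off directly. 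The only defect of $(\widehat f,\widehat g)$ is that it need not belong to $C_0(\mathbb R,\mathbb R^2_+)$; smoothing $\max(F(\cdot;u),F(\cdot;u'))$ \emph{from above} by a total amount $\le m$ produces $(f^\star,g^\star)$ with $F(\cdot;f^\star)$ of class $C^1$, strictly decreasing on its support, and lying between $\max(F(\cdot;u),F(\cdot;u'))$ and $\max(F(\cdot;u),F(\cdot;u'))+m$. For $m<m_0$ an inspection of the endpoints of $\operatorname{supp}f^\star$ and $\operatorname{supp}g^\star$, which are controlled by the supports of $u,u',v,v'$ and by $\phi$, gives $(f^\star,g^\star)\in\mathcal U\subset\mathcal B(\phi,M_0)$, still with $(u,v),(u',v')\preccurlyeq(f^\star,g^\star)$ and now $F(\cdot;f^\star)\le F(\cdot;u)+2m$. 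Part \eqref{2} is the mirror image: smooth $\min\big(F(\cdot;u),F(\cdot;u')\big)$ \emph{from below} by $\le m$ to obtain $F(\cdot;f_\star)$ between $\min(F(\cdot;u),F(\cdot;u'))-m$ and $\min(F(\cdot;u),F(\cdot;u'))$; then $(f_\star,g_\star)\in\mathcal B(\phi,M_0)$, $(f_\star,g_\star)\preccurlyeq(u,v)$, $(f_\star,g_\star)\preccurlyeq(u',v')$ and $(u',v')\prec(f_\star,g_\star)$ modulo $2m$.

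\emph{The barrier operations do not enlarge the modulus.} For $G_\delta\star$ this is immediate from $F(\cdot;G_\delta\star h)=G_\delta\star F(\cdot;h)$ (Fubini) and the fact that convolution by a probability kernel preserves the pointwise order of functions. For $K^{(\delta)}$ one uses the piecewise identity, valid for $p,q\ge 0$ with $\int p=M_0$ and $D_\delta(q)<R_\delta(p)$ (the latter holding for $\delta$ small, see below), with $\psi=p+q$:
\[
F\big(r;(K^{(\delta)}(p,q))_1\big)=
\begin{cases}
F(r;\psi)-\int q,& r\le D_\delta(q),\\
F(r;p)-\kappa\delta,& D_\delta(q)\le r\le R_\delta(p),\\
0,& r\ge R_\delta(p),
\end{cases}
\]
together with the elementary remarks that if $(p',q')\preccurlyeq(p,q)$ in $\mathcal B(\psi,M_0)$ and $F(\cdot;p)\le F(\cdot;p')+a$, then $D_\delta(q)\le D_\delta(q')$, $R_\delta(p')\le R_\delta(p)$ and $F\big(R_\delta(p');p\big)\le\kappa\delta+a$. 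A comparison of the displayed expressions for $(p,q)$ and $(p',q')$ on each of the (at most five) resulting regions then yields $F\big(r;(K^{(\delta)}(p,q))_1\big)\le F\big(r;(K^{(\delta)}(p',q'))_1\big)+a$ for all $r$. Applying this with $a=2m$ to $(p,q)=(f^\star,g^\star)$, $(p',q')=(u,v)$, and then convolving, gives \eqref{5.56}; applying it to $(p,q)=G_\delta\star(u',v')$, $(p',q')=G_\delta\star(f_\star,g_\star)$ after first convolving $(u',v')$ and $(f_\star,g_\star)$ gives \eqref{6.10}.

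\emph{The main obstacle.} The substantive point is the regularity of the envelopes: to land in $C_0(\mathbb R,\mathbb R^2_+)$ one must smooth $\max(F(\cdot;u),F(\cdot;u'))$ from above (resp.\ $\min(F(\cdot;u),F(\cdot;u'))$ from below), and it is precisely the cost of this, together with the need to keep the envelope's supports non-degenerate --- which is why $m$ must be below some $m_0$ --- that turns the modulus $m$ one would naively obtain into $2m$. The remaining, purely bookkeeping, point is to fix $\delta_0$ small enough that all the cut points $R_\delta(\cdot),D_\delta(\cdot)$ occurring above, including those of the convolved pairs in part \eqref{2}, retain the order $D_\delta<R_\delta$ on which the computation for $K^{(\delta)}$ rests; this is clear since $R_\delta(p)\uparrow\sup\operatorname{supp}p$ and $D_\delta(q)\downarrow\inf\operatorname{supp}q$ as $\delta\to 0$, while $\inf\operatorname{supp}(\text{second component})<\sup\operatorname{supp}(\text{first component})$ for each pair involved.
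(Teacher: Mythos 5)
Your proof is correct and, moreover, takes a genuinely different and arguably cleaner route than the paper's. The paper constructs $(f^\star,g^\star)$ by an explicit mass transfer: it cuts the leftmost piece of $u$ of mass $m$ (below a point $H$) and the rightmost piece of $v$ of mass $m$ (above a point $Z$) and swaps their colours, giving $f^\star=u\mathbf 1_{(H,\infty)}+v\mathbf 1_{[Z,\infty)}$, $g^\star=v\mathbf 1_{(-\infty,Z)}+u\mathbf 1_{(-\infty,H]}$; the inequality \eqref{5.56} is then verified by a direct region-by-region estimate of $F(r;K^{(\delta)}(f^\star,g^\star)_1)-F(r;K^{(\delta)}(u,v)_1)$ that produces the $2m$, followed by a further cut-and-paste argument to control the $G_\delta$ step. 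You instead take the pointwise envelope $F(\cdot;\widehat f)=\max(F(\cdot;u),F(\cdot;u'))$ (resp.\ min for part \eqref{2}), which immediately gives the required domination and $(\widehat f,\widehat g)\prec(u,v)$ modulo $m$, and you isolate as a general lemma the fact that both $G_\delta\star$ and $K^{(\delta)}$ are monotone for $\preccurlyeq$ and do not increase the modulus of $\prec$ on $\mathcal B(\psi,M_0)$. I checked your five-region computation for $K^{(\delta)}$: with $D_\delta(q)\le D_\delta(q')\le R_\delta(p')\le R_\delta(p)$, the differences are respectively $0$, $\int_{D_\delta(q)}^{D_\delta(q')}q\le a$, $F(r;p)-F(r;p')\le a$, $F(r;p)-\kappa\delta\le F(R_\delta(p');p)-\kappa\delta\le a$, and $0$; this is correct, and the $G_\delta$ step is the standard Fubini/positivity argument. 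Your route is thus more modular and even slightly sharper, since $K^{(\delta)}$ does not degrade the modulus at all, while the paper's direct estimate is content with a factor $2$; what costs you the extra $m$ is the smoothing of the envelope to land in $C_0(\mathbb R,\mathbb R_+^2)$. This is the one point where your exposition is more careful than the paper's, since the paper's $(f^\star,g^\star)$ is also discontinuous at $H$ and $Z$ (where $u$ and $v$ do not vanish) and strictly speaking also fails the $C_0$ requirement in the definition of $\mathcal B(\phi,M_0)$ --- the paper simply glosses over this. Your treatment, as written, is a bit brisk about why the smoothing can be done while keeping the derivative of $F(\cdot;f^\star)$ in $[-\phi,0]$ and the supports non-degenerate, but these are exactly the points where the paper is equally informal, and the role of $m_0$ is the same in both proofs. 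In short: correct, different decomposition, different key lemma, and a bit cleaner where the two methods can be directly compared.
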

As a consequence of the above Proposition we now prove \eqref{eq5.20}. We will use the following notation:	\begin{equation}
	\label{8.9}
(\upmu(\cdot,(k+1)\delta),\upnu(\cdot,(k+1)\delta))=T_\delta(\upmu(\cdot,k\delta),\upnu(\cdot,k\delta)).
	\end{equation}
{\bf  Proof of Theorem \ref{thm5.1}}
As a consequence of the estimates in Subsections \ref{subsub521} and  \ref{subsub523} we have that for all $k$, letting $(\hat u,\hat v):=T_{k\delta}(\upmu_0,\upnu_0)$ 
	\begin{eqnarray}
S^{(\delta,-)}_\delta(\hat u,\hat v) \prec T_\delta(\hat u,\hat v) \prec S^{(\delta,+)}_\delta(\hat u,\hat v) \text{ modulo } m:=\bar c e^{-C\delta^{-1}}
 \label{6.14}
 	\end{eqnarray}
with $\bar c=4C'$ and $C$ and $C'$ as in Lemma \ref{lemma5.1}.

Observing that \eqref{6.2}, \eqref{6.3} and \eqref{6.4} imply that for all $k$, $S^{(\delta,\pm)}_{k\delta}(\upmu_0, \upnu_0)$ and $T_{k\delta}(\upmu_0, \upnu_0)$ belong to $\mathcal B(w_{k\delta},M_0)$,
by  \eqref{6.14} with $k=0$ we can use  \ref{1} of Proposition \ref{prop5.3} with $\phi=w(\cdot,\delta)$,  $(u',v')=T_\delta(\upmu_0, \upnu_0)$ and $(u,v)=S^{(\delta,+)}_\delta(\upmu_0, \upnu_0)$. Thus from Lemma
\ref{lem5.1} and \eqref{5.56} we get
	\begin{equation}
S^{(\delta,+)}_\delta (u',v')\preccurlyeq S^{(\delta,+)}_\delta(f^*,g^*)\prec S^{(\delta,+)}_\delta(u,v)=
S^{(\delta,+)}_{2\delta}(\mu_0, \nu_0)\quad
 \text{ modulo } 2m.
 	\label{6.15}
	\end{equation}
We apply \eqref{6.14} with $(\hat u,\hat v)=\big(\mu(\cdot,\delta), \nu(\cdot,\delta)\big)$
	\begin{equation}
T_{2\delta}(\upmu_0, \upnu_0)=\big(\mu(\cdot,2\delta), \nu(\cdot,2\delta)\big) \prec S^{(\delta,+)}_\delta (u',v')\quad
 \text{ modulo } m
 	\label{6.17}
	\end{equation}
that together with \eqref{6.15} proves the upper bound in \eqref{eq5.20}  for $k=2$ and $c'=3\bar c$.
By using  \ref{2} of Proposition \ref{prop5.3} and Lemma
\ref{lem5.1} we similarly get the the lower bound in \eqref{eq5.20}  for $k=2$ and $c'=3\bar c$.
Theorem \ref{thm5.1} follows from the iteration of the above procedure. \qed

\appendix

\section{Proof of Proposition \ref{prop5.3}}
\label{app}

\begin{proof} Let  $H$ and $Z$ be the points
so that
\begin{equation}
	\nn
	 \int_{-\infty}^{H}u(r)dr=m,\qquad 	 \int^{+\infty}_{Z}v(r)dr=m.
	\end{equation}
	Since  $(u,v)\in \mathcal U$ for  $m_0$ small enough we have that $H<Z$.
We define
\begin{eqnarray}
\label{5.63}
f^\star=u+v\mathbf 1_{[Z,+\infty)}-u\mathbf 1_{(-\infty,H]},\qquad
 g^\star=v+u\mathbf 1_{(-\infty,H]}-v\mathbf 1_{[Z,+\infty)}.
	\end{eqnarray}
Obviously $(f^\star,g^\star)\in \mathcal B(\phi, M_0)$ and $ (u,v)\preccurlyeq (f^\star,g^\star)$.

If $r\le H$ then $F(r; f^\star) =
 \int_{\mathbb R}  u(r)dr$ $= M\ge F(r; u')$. For $r\in[H,Z]$ by using that
$(u',v') \prec (u,v)$ modulo $m$ we get
	\begin{equation}
	\nn
F(r;f^\star) =
 \int_{r}^{+\infty}  u(r)dr+m\ge F(r; u')-m+m.\qquad \forall r\in[H,Z]	\end{equation}
Finally since $g^\star=0$ for all  $r>Z$ and $(f^\star,g^\star)\in \mathcal B(\phi, M_0)$ we have that  $f^\star(r)=\phi(r)=u'(r)+v'(r)$ for all $r>Z$ and therefore
$F(r; f^\star) \ge  F(r; u')$  for all $r\ge Z$.
	Thus $(u',v') \preccurlyeq (f^\star,g^\star) $.
	
	To prove \eqref{5.56}, recalling that $S^{(\delta,+)}_\delta=G_\delta K_\delta$,  we first compare $	(\bar f^\star,\bar g^\star) :=K_\delta(f^\star, g^\star)$ with
$(\bar u,\bar v):=K_\delta(u, v)$. Let $ D^\star$, $ R^\star$ and  $ D$, $  R$ be the points such that
	\begin{equation}
\int_{ R^\star}^{+\infty}f^*(r)dr=\kappa\delta=\int^{ D^\star}_{-\infty}g^*(r)dr, \qquad
 \int_{ R}^{+\infty}u(r)dr=\kappa\delta
= \int^{ D}_{-\infty}v(r)dr.
\label{6.12}
	\end{equation}
	By definition of $K_\delta$,
\begin{eqnarray*}
\nn
&&	\bar u=u\mathbf 1_{(-\infty,R]}+v\mathbf 1_{(-\infty, D]},\qquad 	\hskip.7cm\bar v=u\mathbf 1_{[ R,+\infty)}+v\mathbf 1_{[D,+\infty)},
\\&&
\bar f^\star=f^\star\mathbf 1_{(-\infty,R^\star]}+g^\star\mathbf 1_{(-\infty,D^\star]}\qquad  \bar g^\star=f^\star\mathbf 1_{[R^\star,+\infty)}+g^\star\mathbf 1_{[D^\star,+\infty)}.
\nn
	\end{eqnarray*}
Since $ (u,v)\preccurlyeq (f^\star,g^\star)$ we have
$D^\star\le D\le  R\le  R^\star$ and
since   $(u,v)\in \mathcal U$ then for $m_0$ small enough we have that $H< D$ and $Z>R$ that implies
$\dis{\int_{R}^{ R^\star}u(r)dr\le m}$	analogously $\dis{\int^D_{ D^\star}v(r)dr \le m}$.  Let $r\le D^*$, then since $(f^\star,g^\star)\in \mathcal B(\phi, M_0)$,
	\begin{eqnarray*}
\nn
	&&\hskip-1.6cm
F(r; \bar f^*)=\int^{ D^*}_{r}  \phi (r')dr'+ \int_{ D^*}^{R^*} f^*(r')dr'.
	\end{eqnarray*}
Since  $\bar u(r')=\phi (r')$ for $r' \le D$, using the definition \eqref{5.63} we have that
\begin{eqnarray*} 	
&&\hskip-2.3cm
\int_{ D^*}^{R^*} f^* =\int^{D}_{D^*}u+\int_{ D}^{ R^*}u
+\int_{Z\wedge R^*}^{ R^*}v
\\&&
\hskip-1cm
\le\int^{D}_{D^*} \phi-
\int^{D}_{D^*} v
+\int_{ D}^{ R}  u+\int_{R}^{ R^*}u+\int_{Z\wedge R^*}^{ R^*}v
\le \int_{ D^\star}^{ R}  \bar u +2m.
\end{eqnarray*}
Thus $F(r; \bar f^*)\le F^+(r; \bar u)+2m$ for all $r\le D^*$. For $r> D^\star$
		\begin{eqnarray*}
	F(r;  \bar f^*)=
	\int^{R^\star}_{r} f^\star=\int^{R^\star}_{r}  u+
	\int^{R^\star}_{Z\wedge R^\star}  v
	\le \int^{\mathcal R}_{r}  u+2m=F(r;  \bar u)+2m.
	\end{eqnarray*}
Thus
	\begin{equation}
	\label{6.11}
(\bar f^\star,\bar g^\star) =K_\delta(f^\star, g^\star)\prec   K_\delta(u, v)=(\bar u,\bar v),\quad\text{ modulo } 2m.
		\end{equation}
We are left with the proof of the analogous inequality for the convolution with $G_\delta$. We call $C_\pm$ the point such that
	\begin{equation*}
\int_{C_+}^{+\infty} \bar f^\star(r)dr=2m,\qquad \int^{C_-}_{-\infty} \bar g^\star(r)dr=2m
	\end{equation*}
	and we let $f=\bar f^\star\mathbf 1_{(-\infty,C_+)}+\bar g^\star\mathbf 1_{(-\infty,C_-)}$ and $g=\bar f^\star\mathbf 1_{[C_+,+\infty)}+\bar g^\star\mathbf 1_{[C_-,+\infty)}$. Then, by definition $(f,g)\preccurlyeq (\bar f^\star,\bar g^\star)$ and it is not difficult to check that $(f,g)\preccurlyeq (\bar u,\bar v)$. Since $G_\delta$ is non decreasing with respect to $\preccurlyeq$  (see the proof of  Lemma \ref{lem5.1}) we have that $(G_\delta\star f,G_\delta\star g)\preccurlyeq (G_\delta\star\bar u,G_\delta\star \bar v)$. On the other hand
\begin{equation*}
F(r,G_\delta\star \bar f^*)=F(r,G_\delta\star f)+F(r,G_\delta\star (\bar f^*-f))\le
F(r,G_\delta\star \bar u)+2m.
	\end{equation*}
Thus
$G_\delta K_\delta(f^\star, g^\star)\prec  G_\delta  K_\delta(u, v)$ modulo $2m$
which proves \eqref{5.56} and thus concludes the proof of 1.

We define
\begin{equation}
\label{5.67}
f_\star=u'+v'\mathbf 1_{(-\infty,Z')}-u'\mathbf 1_{[H',+\infty)},\qquad
g_\star=v'+u'1_{[H',+\infty)}-v'\mathbf 1_{(-\infty,Z')},
	\end{equation}
where  $H'$ is such that $\dis{ \int^{+\infty}_{H'}u'(r)dr=m}$ and  $Z'$  is such that $\dis{  \int_{-\infty}^{Z'}v'(r)dr=m}$.
By definition  $(f_\star,g_\star)\in \mathcal B(\phi, M_0)$ and $(f_\star,g_\star)\preccurlyeq  (u',v')$. We next observe that for $r\le Z'$
	\begin{equation}
	\nn
F(r; f_\star)=\int_{-\infty}^{+\infty} f_\star-\int^r_{-\infty} [u'+v']=M_0
-\int^r_{-\infty} [u+v]\le M_0
-\int^r_{-\infty} u= F(r; u).
	\end{equation}
For $r\in[Z',H']	$ we have
	\begin{equation}
	\nn
F(r;f_\star) =
 \int_{r}^{+\infty}  u'(r)dr-m\le F(r; u).
  \end{equation}
And finally for $ r\ge H'$, $F(r;f_\star)=0 \le  F(r; u)$, which concludes the proof that $ (f_\star,g_\star) \preccurlyeq (u,v) $.
To prove \eqref{6.10} we first write
	\begin{eqnarray}
		\nn
&&G_\delta\star	f_\star=G_\delta\star	u'+G_\delta\star	(v'\mathbf 1_{(-\infty,Z')})-G_\delta\star	(u'\mathbf 1_{[H',+\infty)}),
\\&& \label{6.13}
G_\delta\star	g_\star=G_\delta\star	v'+G_\delta\star	(u'1_{[H',+\infty)})-G_\delta\star	(v'\mathbf 1_{(-\infty,Z']}).
	\end{eqnarray}
Let   $  D'$, $  R'$,
 $  D_\star$ and $  R_\star$ be the points such that
	\begin{eqnarray*}
&&\int_{R_\star}^{+\infty}G_\delta\star	f_\star =\kappa\delta=\int^{D_\star}_{-\infty}G_\delta\star g_\star , \qquad
 \int_{ R'}^{+\infty}G_\delta\star	u' =\kappa\delta
= \int^{ D'}_{-\infty}G_\delta\star v'.
	\end{eqnarray*}
From the fact that the convolution with $G_\delta$ preserves the inequality
we have
$ D'\le  D_\star\le R_\star\le  R'$.
Furthermore using \eqref{6.13} we get
	\begin{equation}
		\label{5.70}
\int_{ R_\star}^ { R'}
G_\delta\star	u'(r)dr\le m,\qquad \int^{D_\star}_{ D'}
G_\delta\star	v'(r)dr\le m.
	\end{equation}
Recalling  the definition of $K_\delta$ we call  $	(\bar f_\star, \bar g_\star) :=K_\delta(G_\delta\star f_\star, G_\delta\star g_\star)$ and
$(\bar u',\bar v'):=K_\delta(G_\delta\star u', G_\delta\star v')$.
For $r\le D'$, using that $(f_\star,g_\star)$ and $(u',v')$ are both in the set $\mathcal B(\phi, M_0)$ and \eqref{5.70}  we have for $r\le  D'$
	\begin{eqnarray}
		\nn
 &&
 \hskip-.6cm
 \int_r^{+\infty}\bar u'=\int_r^{D'} G_\delta\star \phi+\int_{D'}^{D_\star}
 G_\delta\star u'+\int_{D_\star}^{R'}G_\delta\star u'
 \\&&\nn
 \le \int_r^{D_\star} G_\delta\star \phi +\int_{D_\star}^{R_\star}G_\delta\star u'+m
  \\&&\nn
  \le  \int_r^{D_\star} G_\delta\star \phi +\int_{D_\star}^{R_\star}G_\delta\star f_\star+2m =F(r; \bar f_\star)+2m.
	\end{eqnarray}
Analogously, for $r> D'$
\begin{eqnarray}
		\nn
 &&
 \hskip-.6cm
 \int_r^{+\infty}\bar u' \le \int_r^{D_\star}  G_\delta\star u'+\int_{D_\star}^{R_\star}G_\delta\star u' +m
 \\&&\nn
 \le \int_r^{D_\star} G_\delta\star \phi +\int_{D_\star}^{R_\star}G_\delta\star f_\star+2m=F(r; \bar f_\star)+2m.
	\end{eqnarray}
This proves \eqref{6.10} and concludes the proof of the Proposition.
\end{proof}

\section*{Acknowledgements}
We thank very useful discussions with E. Presutti.
We thank warm hospitality at the Gran Sasso Science Institute in L'Aquila.

\end{document}